\newtheorem{theorem}{Theorem}
\newtheorem{lemma}{Lemma}
\newtheorem{prop}{Proposition}
\theoremstyle{definition}
\theoremstyle{remark}
\newtheorem{remark}{Remark}
\numberwithin{equation}{section}
\def\re{\mathbb{R}}
\def\N{\mathbb{N}}
\def\({\left(}
\def\){\right)}
\def\la{\lambda}
\def\intO{\int_{\Omega}}
\begin{document}
\title[]{On a weighted Trudinger-Moser type inequality on the whole space and related maximizing problem}
\author{Van Hoang Nguyen}
\address{Institut de Math\'ematiques de Toulouse, Universit\'e Paul Sabatier 118 route de Narbonne, 31062 Toulouse C\'edex 09, France}
\email{van-hoang.nguyen@math.univ-toulouse.fr}

\author{Futoshi Takahashi}
\address{Department of Mathematics, Osaka City University \& OCAMI, Sumiyoshi-ku, Osaka, 558-8585, Japan}

%\curraddr{}
\email{futoshi@sci.osaka-cu.ac.jp}
%\thanks{}

\subjclass[2010]{Primary 35A23; Secondary 26D10.}
% 26D10: Inequalities involving derivatives and differential and integral operators
% 26D15: Inequalities for sums, series and integrals
% 35A23: Inequalities involving derivatives and differential and integral operators, inequalities for integrals
% 46E35: Sobolev spaces and other spaces of ``smooth" functions, embedding theorems, trace theorems

\keywords{Trudinger-Moser inequality, weighted Sobolev spaces, maximizing problem.}
\date{\today}

\dedicatory{}

\begin{abstract}
In this paper, we establish a weighted Trudinger-Moser type inequality with the full Sobolev norm constraint on the whole Euclidean space.
Main tool is the singular Trudinger-Moser inequality on the whole space recently established by Adimurthi and Yang, and a transformation of functions.
We also discuss the existence and non-existence of maximizers for the associated variational problem.
\end{abstract}

\maketitle

%%%%%%%%%%%%%%%%%%%%%%%%%%%%%%%%%%%%%%%%%%%%%%%%%%%%%%%%%%%%%%%%%%%%%%%%%%%%%%%%%%%%%%%%%%%%%%%%%%%%%%%%%%%%%%%%%%%
%
% \S1 Introduction
%
%%%%%%%%%%%%%%%%%%%%%%%%%%%%%%%%%%%%%%%%%%%%%%%%%%%%%%%%%%%%%%%%%%%%%%%%%%%%%%%%%%%%%%%%%%%%%%%%%%%%%%%%%%%%%%%%%%%
\section{Introduction}

Let $\Omega \subset \re^N$, $N \ge 2$ be a domain with finite volume.
Then the Sobolev embedding theorem assures that $W^{1,N}_0(\Omega) \hookrightarrow L^q(\Omega)$ for any $q \in [1, +\infty)$,
however, as the function $\log \( \log (e/|x|) \) \in W^{1,N}_0(B)$, $B$ the unit ball in $\re^N$, shows,
the embedding $W^{1,N}_0(\Omega) \hookrightarrow L^{\infty}(\Omega)$ does not hold.
Instead, functions in $W^{1,N}_0(\Omega)$ enjoy the exponential summability:
\[
	W^{1,N}_0(\Omega) \hookrightarrow \{ u \in L^N(\Omega) \, : \, \intO \exp \(\alpha |u|^{\frac{N}{N-1}} \) dx < \infty \quad \text{for any} \, \alpha > 0 \},
\]
see Yudovich \cite{Yudovich}, Pohozaev \cite{Pohozaev}, and Trudinger \cite{Trudinger}.
Moser \cite{Moser} improved the above embedding as follows, now known as the Trudinger-Moser inequality:
Define
\[
	TM(N, \Omega, \alpha) = \sup_{u \in W^{1,N}_0(\Omega) \atop \| \nabla u \|_{L^N(\Omega)} \le 1} \frac{1}{|\Omega|} \intO \exp (\alpha |u|^{\frac{N}{N-1}}) dx.
\]
Then we have
\begin{align*}
	TM(N, \Omega, \alpha) 
%= \sup_{u \in W^{1,N}_0(\Omega) \atop \| \nabla u \|_{L^N(\Omega)} \le 1} \frac{1}{|\Omega|} \intO \exp (\alpha |u|^{\frac{N}{N-1}}) dx
	\begin{cases}
	&< \infty, \quad \alpha \le \alpha_N, \\
	&= \infty, \quad \alpha > \alpha_N,
	\end{cases}
\end{align*}
here and henceforth $\alpha_N = N \omega_{N-1}^{\frac{1}{N-1}}$ and $\omega_{N-1}$ denotes the area of the unit sphere $S^{N-1}$ in $\re^N$. 
On the attainability of the supremum, Carleson-Chang \cite{Carleson-Chang}, Flucher \cite{Flucher}, and Lin \cite{KCLin} proved that 
$TM(N, \Omega, \alpha)$ is attained on any bounded domain for all $0 < \alpha \le \alpha_N$.

Later, Adimurthi-Sandeep \cite{Adimurthi-Sandeep} established a weighted (singular) Trudinger-Moser inequality as follows:
Let $0 \le \beta < N$ and put $\alpha_{N, \beta} = \(\frac{N-\beta}{N}\) \alpha_N$.
Define
\[
	\widetilde{TM}(N, \Omega, \alpha, \beta) = \sup_{u \in W^{1,N}_0(\Omega) \atop \| \nabla u \|_{L^N(\Omega)} \le 1} \frac{1}{|\Omega|} \intO \exp (\alpha |u|^{\frac{N}{N-1}}) \frac{dx}{|x|^{\beta}}.
\]
Then it is proved that
\[
	\widetilde{TM}(N, \Omega, \alpha, \beta) 
	\begin{cases}
	&< \infty, \quad \alpha \le \alpha_{N, \beta}, \\
	&= \infty, \quad \alpha > \alpha_{N, \beta}.
	\end{cases}
\]
On the attainability of the supremum, recently Csat\'o-Roy \cite{Csato-Roy(CVPDE)}, \cite{Csato-Roy(CPDE)} proved that 
$\widetilde{TM}(2, \Omega, \alpha, \beta)$ is attained for $0 < \alpha \le \alpha_{2,\beta} = 2\pi(2-\beta)$ for any bounded domain $\Omega \subset \re^2$.
For other types of weighted Trudinger-Moser inequalities, see for example,
\cite{Calanchi}, \cite{Calanchi-Ruf(JDE)}, \cite{Calanchi-Ruf(NA)}, \cite{Furtado-Medeiros-Severo}, \cite{Lam-Lu}, \cite{Souza}, \cite{Souza-O}, \cite{Yang}, 
to name a few.

On domains with infinite volume, for example on the whole space $\re^N$, the Trudinger-Moser inequality does not hold as it is.
However, several variants are known on the whole space.
In the following, let
\[
	\Phi_N(t) = e^t - \sum_{j=0}^{N-2} \frac{t^j}{j!} 
\]
denote the truncated exponential function.

First,
Ogawa \cite{Ogawa}, Ogawa-Ozawa \cite{Ogawa-Ozawa}, Cao \cite{Cao}, Ozawa \cite{Ozawa(JFA)}, and Adachi-Tanaka \cite{Adachi-Tanaka}
proved that the following inequality holds true, which we call Adachi-Tanaka type Trudinger-Moser inequality:
Define
\begin{align}
\label{AT-sup}
	A(N, \alpha) = \sup_{u \in W^{1,N}(\re^N) \setminus \{ 0 \} \atop \| \nabla u \|_{L^N(\re^N)} \le 1} \frac{1}{\| u \|^N_{L^N(\re^N)}} \int_{\re^N} \Phi_N (\alpha |u|^{\frac{N}{N-1}}) dx.
\end{align}
Then
\begin{equation}
\label{AT-TM}
	A(N, \alpha)
	\begin{cases}
	&< \infty, \quad \alpha \, < \, \alpha_N, \\
	&= \infty, \quad \alpha \ge \alpha_N.
	\end{cases}
\end{equation}
The functional in (\ref{AT-sup})
\[
	F(u) = \frac{1}{\| u \|^N_{L^N(\re^N)}} \int_{\re^N} \Phi_N (\alpha |u|^{\frac{N}{N-1}}) dx
\]
enjoys the scale invariance under the scaling $u(x) \mapsto u_{\la}(x) = u(\la x)$ for $\la > 0$, i.e., $F(u_{\la}) = F(u)$ for any $u \in W^{1,N}(\re^N) \setminus \{ 0 \}$.
Note that the critical exponent $\alpha = \alpha_N$ is not allowed for the finiteness of the supremum.
On the attainability of the supremum, Ishiwata-Nakamura-Wadade \cite{Ishiwata-Nakamura-Wadade} proved that $A(N, \alpha)$ is attained for any $\alpha \in (0, \alpha_N)$.
In this sense, Adachi-Tanaka type Trudinger-Moser inequality has a subcritical nature of the problem.

On the other hand, Ruf \cite{Ruf} and Li-Ruf \cite{Li-Ruf} proved that the following inequality holds true:
Define
\begin{align}
\label{LR-sup}
	B(N, \alpha) = \sup_{u \in W^{1,N}(\re^N) \atop \| u \|_{W^{1,N}(\re^N)} \le 1} \int_{\re^N} \Phi_N (\alpha |u|^{\frac{N}{N-1}}) dx.
\end{align}
Then 
\begin{equation}
\label{LR-TM}
	B(N, \alpha)
	\begin{cases}
	&< \infty, \quad \alpha \, \le \, \alpha_N, \\
	&= \infty, \quad \alpha > \alpha_N.
	\end{cases}
\end{equation}
Here $\| u \|_{W^{1,N}(\re^N)} = \( \| \nabla u \|_{L^N(\re^N)}^N + \| u \|_{L^N(\re^N)}^N \)^{1/N}$ is the full Sobolev norm.
Note that the scale invariance $(u \mapsto u_{\la})$ does not hold for this inequality.
Also the critical exponent $\alpha = \alpha_N$ is permitted to the finiteness of (\ref{LR-sup}). 
Concerning the attainability of $B(N, \alpha)$, it is known that $B(N, \alpha)$ is attained for $0 < \alpha \le \alpha_N$ if $N \ge 3$ \cite{Ruf}.
On the other hand when $N = 2$, there exists an explicit constant $\alpha_* > 0$ related to the Gagliardo-Nirenberg inequality in $\re^2$ 
such that $B(2, \alpha)$ is attained for $\alpha_* < \alpha \le \alpha_2 (= 4\pi)$ \cite{Ruf}, \cite{Ishiwata}.
However, if $\alpha >0$ is sufficiently small, then $B(2,\alpha)$ is not attained \cite{Ishiwata}.
The non-attainability of $B(2,\alpha)$ for $\alpha$ sufficiently small is attributed to the non-compactness of ``vanishing" maximizing sequences, as described in \cite{Ishiwata}.

In the following, we are interested in the weighted version of the Trudinger-Moser inequalities on the whole space.
Let $N \ge 2$, $-\infty < \gamma  < N$ and define the weighted Sobolev space $X^{1,N}_{\gamma}(\re^N)$ as
\begin{align*}
	&X^{1,N}_{\gamma}(\re^N) = {\dot W}^{1,N}(\re^N) \cap L^N(\re^N, |x|^{-\gamma} dx) \\
	&=\{ u \in L^1_{loc}(\re^N) \, : \, \| u \|_{X^{1,N}_{\gamma}(\re^N)} = \( \| \nabla u \|_N^N + \| u \|_{N, \gamma}^N \)^{1/N} < \infty \},
\end{align*}
where we use the notation $\| u \|_{N, \gamma}$ for $\( \int_{\re^N} \frac{|u|^N}{|x|^{\gamma}} dx \)^{1/N}$.
We also denote by $X^{1,N}_{\gamma,rad}(\mathbb R^N)$ the subspace of $X^{1,N}_\gamma(\mathbb R^N)$ consisting of radial functions.
We note that a special form of the Caffarelli-Kohn-Nirenberg inequality in \cite{Caffarelli-Kohn-Nirenberg}: 
\begin{equation}
\label{CKN}
	\| u \|_{N, \beta} \le C \| u \|_{N, \gamma}^{\frac{N-\beta}{N-\gamma}} \| \nabla u \|_{N}^{1 - \frac{N-\beta}{N-\gamma}}
\end{equation}
implies that $X^{1,N}_{\gamma}(\re^N) \subset  X^{1,N}_{\beta}(\re^N)$ when $\gamma \le \beta$.
From now on, we assume
\begin{equation}
\label{assumption:weighted AT}
	N \ge 2, \quad -\infty < \gamma \le \beta < N
\end{equation}
and put $\alpha_{N, \beta} = \(\frac{N-\beta}{N}\) \alpha_N$.

Recently, Ishiwata-Nakamura-Wadade \cite{Ishiwata-Nakamura-Wadade} proved that the following weighted Adachi-Tanaka type Trudinger-Moser inequality holds true:
Define
\begin{equation}
\label{weighted AT-sup(radial)}
	\tilde{A}_{rad}(N, \alpha, \beta, \gamma) 
	= \sup_{u \in X^{1,N}_{\gamma, rad}(\re^N) \setminus \{ 0 \} \atop \| \nabla u \|_{L^N(\re^N)} \le 1} \frac{1}{\| u \|_{N, \gamma}^{N(\frac{N-\beta}{N-\gamma})}} 
	\int_{\re^N} \Phi_N (\alpha |u|^{\frac{N}{N-1}}) \frac{dx}{|x|^{\beta}}.
\end{equation}
Then for $N, \beta, \gamma$ satisfying (\ref{assumption:weighted AT}), we have
\begin{align}
\label{weighted AT(radial)}
	\tilde{A}_{rad}(N, \alpha, \beta, \gamma) 
	&\begin{cases}
	&< \infty, \quad \alpha \, < \, \alpha_{N, \beta}, \\
	&= \infty, \quad \alpha \ge \alpha_{N, \beta}.
	\end{cases}
\end{align}
Later,  Dong-Lu \cite{Dong-Lu} extends the result in the non-radial setting.
Let
\begin{equation}
\label{weighted AT-sup}
	\tilde{A}(N, \alpha, \beta, \gamma) 
	= \sup_{u \in X^{1,N}_{\gamma}(\re^N) \setminus \{ 0 \} \atop \| \nabla u \|_{L^N(\re^N)} \le 1} \frac{1}{\| u \|_{N, \gamma}^{N(\frac{N-\beta}{N-\gamma})}} 
	\int_{\re^N} \Phi_N (\alpha |u|^{\frac{N}{N-1}}) \frac{dx}{|x|^{\beta}}.
\end{equation}
Then the corresponding result holds true also for $\tilde{A}(N, \alpha, \beta, \gamma)$.
Attainability of the best constant (\ref{weighted AT-sup(radial)}), (\ref{weighted AT-sup}) is also considered in \cite{Ishiwata-Nakamura-Wadade} and \cite{Dong-Lu}:
$\tilde{A}_{rad}(N, \alpha, \beta, \gamma)$ and $\tilde{A}(N, \alpha, \beta, \gamma)$ are attained for any $0 < \alpha < \alpha_{N, \beta}$.

First purpose of this note is to establish the weighted Li-Ruf type Trudinger-Moser inequality on the weighted Sobolev space $X^{1,N}_{\gamma}(\re^N)$
with $N, \beta, \gamma$ satisfying (\ref{assumption:weighted AT}).
Define
\begin{align}
\label{weighted LR-sup(radial)}
	&\tilde{B}_{rad}(N, \alpha, \beta, \gamma)
	= \sup_{u \in X^{1,N}_{\gamma, rad}(\re^N) \atop \| u \|_{X^{1,N}_{\gamma}(\re^N)} \le 1} \int_{\re^N} \Phi_N (\alpha |u|^{\frac{N}{N-1}}) \frac{dx}{|x|^{\beta}}, \\
\label{weighted LR-sup}
	&\tilde{B}(N, \alpha, \beta, \gamma)
	= \sup_{u \in X^{1,N}_{\gamma}(\re^N) \atop \| u \|_{X^{1,N}_{\gamma}(\re^N)} \le 1} \int_{\re^N} \Phi_N (\alpha |u|^{\frac{N}{N-1}}) \frac{dx}{|x|^{\beta}}.
\end{align}

\begin{theorem}(Weighted Li-Ruf type inequality)
\label{Theorem:weighted LR}
Assume (\ref{assumption:weighted AT}) and put $\alpha_{N, \beta} = \(\frac{N-\beta}{N}\) \alpha_N$. 
Then we have
\begin{align}
\label{weighted LR(radial)}
	&\tilde{B}_{rad}(N, \alpha, \beta, \gamma)
	\begin{cases}
	&< \infty, \quad \alpha \, \le \, \alpha_{N, \beta}, \\
	&= \infty, \quad \alpha > \alpha_{N, \beta}.
	\end{cases}
\end{align}
Furthermore if $0 \le \gamma \le \beta < N$, we have
\begin{align}
\label{weighted LR}
	&\tilde{B}(N, \alpha, \beta, \gamma)
	\begin{cases}
	&< \infty, \quad \alpha \, \le \, \alpha_{N, \beta}, \\
	&= \infty, \quad \alpha > \alpha_{N, \beta}.
	\end{cases}
\end{align}
\end{theorem}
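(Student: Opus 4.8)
The plan is to transport both statements to the (unweighted) singular Trudinger--Moser inequality of Adimurthi and Yang on $\re^N$: for $0\le\tau<N$ and $\alpha'>0$,
\[
\sup_{\|w\|_{W^{1,N}(\re^N)}\le1}\ \intRN\Phi_N\big(\alpha'|w|^{\frac{N}{N-1}}\big)\frac{dx}{|x|^{\tau}}<\infty
\quad\Longleftrightarrow\quad \alpha'\le\alpha_{N,\tau}=\frac{N-\tau}{N}\,\alpha_N ,
\]
the supremum being $+\infty$ --- already along radial functions, by the usual Moser test functions --- when $\alpha'>\alpha_{N,\tau}$. I set
\[
a=\frac{N}{N-\gamma}>0,\qquad \beta'=\frac{N(\beta-\gamma)}{N-\gamma}\in[0,N) ,
\]
and record the elementary identity $\alpha_{N,\beta'}=\frac{N-\beta}{N-\gamma}\,\alpha_N$, which makes $a\alpha\le\alpha_{N,\beta'}$ equivalent to $\alpha\le\alpha_{N,\beta}$; this equivalence is the dictionary between the two problems and pins down the critical exponent.

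To prove (\ref{weighted LR(radial)}) I associate to a radial $u\in X^{1,N}_{\gamma}(\re^N)$ the radial function $v(y)=u\big(a^{-a}|y|^{a}\big)$. Rewriting the three relevant quantities as one-dimensional integrals in the radial variable and substituting $\rho=a^{-a}t^{a}$, the exponent $a$ and the constant $a^{-a}$ are chosen exactly so that
\[
\|\nabla v\|_N^N=a^{N-1}\|\nabla u\|_N^N,\qquad \|v\|_N^N=a^{N-1}\|u\|_{N,\gamma}^N ,
\]
\[
\intRN\Phi_N\big(\alpha|u|^{\frac{N}{N-1}}\big)\frac{dx}{|x|^{\beta}}
= a^{1-N+\beta'}\intRN\Phi_N\big(\alpha|v|^{\frac{N}{N-1}}\big)\frac{dy}{|y|^{\beta'}} .
\]
Thus $u\mapsto v$ is a bijection from $X^{1,N}_{\gamma,rad}(\re^N)$ onto the radial subspace of $W^{1,N}(\re^N)$, mapping $\{\|u\|_{X^{1,N}_\gamma}\le1\}$ onto $\{\|v\|_{W^{1,N}}\le a^{(N-1)/N}\}$. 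Writing $v=a^{(N-1)/N}\tilde v$, so that $\|\tilde v\|_{W^{1,N}}\le1$ and $|v|^{\frac{N}{N-1}}=a\,|\tilde v|^{\frac{N}{N-1}}$, gives
\[
\tilde B_{rad}(N,\alpha,\beta,\gamma)=a^{1-N+\beta'}\sup_{\substack{\|\tilde v\|_{W^{1,N}}\le1\\ \tilde v\ \text{radial}}}\ \intRN\Phi_N\big(a\alpha\,|\tilde v|^{\frac{N}{N-1}}\big)\frac{dy}{|y|^{\beta'}} .
\]
The Adimurthi--Yang dichotomy restricted to radial functions (finiteness is inherited from the general case, and divergence for supercritical exponents already occurs along the radial Moser functions), together with $a\alpha\le\alpha_{N,\beta'}\Leftrightarrow\alpha\le\alpha_{N,\beta}$, now yields (\ref{weighted LR(radial)}).

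To prove (\ref{weighted LR}), where in addition $0\le\gamma$, i.e.\ $a\ge1$, I replace the radial substitution by the non-measure-preserving transformation $v=u\circ\Psi$, $\Psi(y)=|y|^{a-1}y$, so $|\Psi(y)|=|y|^{a}$. The map $\Psi$ is a $C^1$ diffeomorphism of $\re^N\setminus\{0\}$ with $|\det D\Psi(y)|=a\,|y|^{N(a-1)}$, and changing variables $z=\Psi(y)$ gives the same two identities as before, $\|v\|_N^N=\tfrac1a\|u\|_{N,\gamma}^N$ and $\intRN\Phi_N(\alpha|u|^{\frac{N}{N-1}})\frac{dx}{|x|^{\beta}}=a\intRN\Phi_N(\alpha|v|^{\frac{N}{N-1}})\frac{dy}{|y|^{\beta'}}$. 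The gradient term is no longer an identity: since $D\Psi(y)$ dilates the radial direction by $a|y|^{a-1}$ and the $N-1$ tangential directions by $|y|^{a-1}$,
\[
\|\nabla v\|_N^N=\frac1a\intRN\Big(|\nabla u|^2+(a^2-1)\,|\partial_{\rho}u|^2\Big)^{N/2}dz\ \le\ a^{N-1}\|\nabla u\|_N^N ,
\]
where the inequality uses $0\le|\partial_{\rho}u|\le|\nabla u|$ and $a\ge1$. Combined with $\tfrac1a\le1\le a^{N-1}$ this gives $\|v\|_{W^{1,N}}^N\le a^{N-1}\|u\|_{X^{1,N}_\gamma}^N\le a^{N-1}$; renormalizing as above and applying the Adimurthi--Yang inequality to $\tilde v$ with exponent $a\alpha$ shows $\tilde B(N,\alpha,\beta,\gamma)<\infty$ for $\alpha\le\alpha_{N,\beta}$, while for $\alpha>\alpha_{N,\beta}$ one has $\tilde B\ge\tilde B_{rad}=\infty$ by (\ref{weighted LR(radial)}). (The change of variables and the Sobolev chain rule are applied on $\re^N\setminus\{0\}$, which is harmless; alternatively one first argues for $u\in C^\infty_c$ and passes to the limit.)

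The step I expect to require the most care is this non-radial gradient estimate. Unlike in the radial case, $\Psi$ is not conformal, so $\|\nabla(u\circ\Psi)\|_N$ cannot be a fixed multiple of $\|\nabla u\|_N$, and one must bound the anisotropic density $|\nabla u|^2+(a^2-1)|\partial_{\rho}u|^2$ by $a^2|\nabla u|^2$; this is available only when $a\ge1$, i.e.\ $\gamma\ge0$, which is exactly the extra hypothesis in (\ref{weighted LR}). In the radial case $|\partial_{\rho}u|=|\nabla u|$ pointwise, all the displayed relations are exact equalities, and nothing beyond $\gamma<N$ (hence $\gamma\le\beta<N$) is needed.
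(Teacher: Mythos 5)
Your proposal is correct and follows essentially the same route as the paper: your map $\Psi(y)=|y|^{a-1}y$ with $a=\frac{N}{N-\gamma}$ is the paper's change of functions $v(x)=\left(\frac{N-\gamma}{N}\right)^{\frac{N-1}{N}}u(F(x))$ up to a harmless dilation and a normalization constant that you instead absorb at the end via $v=a^{(N-1)/N}\tilde v$, your anisotropic gradient bound (valid precisely for $a\ge 1$, i.e.\ $\gamma\ge 0$, with equality for radial $u$) is the paper's pointwise inequality \eqref{eq:pointwiseineq}, and both arguments then reduce to the Adimurthi--Yang singular inequality at the shifted exponent $\frac{N}{N-\gamma}\alpha$ and weight $\tilde\beta=\frac{N(\beta-\gamma)}{N-\gamma}$. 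The only minor difference is in the supercritical case, where you invoke that sharpness of Adimurthi--Yang is witnessed by radial Moser functions, while the paper argues via the radial bijection plus rearrangement and also gives an explicit weighted Moser-sequence computation; this does not affect correctness.
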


We also study the existence and non-existence of maximizers for the weighted Trudinger-Moser inequalities \eqref{weighted LR(radial)} and \eqref{weighted LR}.

\begin{theorem}
\label{Maximizers(radial)}
Assume (\ref{assumption:weighted AT}). Then the following statements hold.
\begin{enumerate}
\item[(i)] If $N \geq 3$ then $\tilde{B}_{rad}(N,\alpha,\beta,\gamma)$ is attained for any $0< \alpha \leq \alpha_{N,\beta}$.
\item[(ii)] If $N=2$ then $\tilde{B}_{rad}(2,\alpha,\beta,\gamma)$ is attained for any $0< \alpha \leq \alpha_{2,\beta}$ if $\beta > \gamma$, 
while there exists $\alpha_* > 0$ such that $\tilde{B}_{rad}(2,\alpha,\beta,\beta)$ is attained for any $\alpha_* < \alpha < \alpha_{2,\beta}$.
\item[(iii)] $\tilde{B}_{rad}(2,\alpha,\beta,\beta)$ is not attained for sufficiently small $\alpha > 0$.
\end{enumerate}
\end{theorem}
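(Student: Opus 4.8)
The plan is to reduce the radial weighted problem to the (radial) \emph{singular} Li-Ruf inequality on $\re^N$ of Adimurthi-Yang via an explicit change of variables, and then to use what is known about attainability of the latter. Put
\[
	p=\frac{N}{N-\gamma}\quad(>0,\ \text{since}\ \gamma<N),\qquad c=p^{-\frac{N-1}{N}},\qquad \beta'=N-p(N-\beta)=\frac{N(\beta-\gamma)}{N-\gamma},\qquad \mu=p\,\alpha,
\]
and note that \eqref{assumption:weighted AT} gives $\beta'\in[0,N)$. For radial $u\in X^{1,N}_{\gamma}(\re^N)$ set $(Tu)(x)=c\,u\!\big(|x|^{p}/p^{p}\big)$. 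The one-dimensional change of variable $s=r^{p}/p^{p}$ in polar coordinates shows that $T$ is a bijection of $X^{1,N}_{\gamma,rad}(\re^N)$ onto $W^{1,N}_{rad}(\re^N)$ preserving radial monotonicity, that $\|\nabla(Tu)\|_{N}^{N}$ and $\|Tu\|_{N}^{N}$ equal $c^{N}p^{N-1}$ times $\|\nabla u\|_{N}^{N}$ and $\|u\|_{N,\gamma}^{N}$ respectively — hence $\|Tu\|_{W^{1,N}(\re^N)}=\|u\|_{X^{1,N}_{\gamma}(\re^N)}$ by the choice of $c$ — and that
\[
	\int_{\re^N}\Phi_N\!\big(\alpha|u|^{\frac{N}{N-1}}\big)\frac{dx}{|x|^{\beta}}=p^{\,1-N+\beta'}\int_{\re^N}\Phi_N\!\big(\mu|Tu|^{\frac{N}{N-1}}\big)\frac{dx}{|x|^{\beta'}}.
\]
Writing $\tilde B^{AY}_{rad}(N,\mu,\beta')=\sup\big\{\int_{\re^N}\Phi_N(\mu|w|^{\frac{N}{N-1}})|x|^{-\beta'}dx:\ w\in W^{1,N}_{rad}(\re^N),\ \|w\|_{W^{1,N}}\le1\big\}$ for the radial singular Adimurthi-Yang functional, we get $\tilde B_{rad}(N,\alpha,\beta,\gamma)=p^{\,1-N+\beta'}\,\tilde B^{AY}_{rad}(N,\mu,\beta')$, with $T$ and $T^{-1}$ carrying maximizers to maximizers. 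Since $1-\tfrac{\beta'}{N}=\tfrac{N-\beta}{N-\gamma}$ one has $\mu\le\alpha_{N,\beta'}\iff\alpha\le\alpha_{N,\beta}$, so the admissible ranges and the critical endpoints correspond exactly, and the whole theorem becomes an attainability question for the singular Adimurthi-Yang inequality on $\re^N$.

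Consider first $\beta=\gamma$, i.e.\ $\beta'=0$. Then the target is $\tilde B^{AY}_{rad}(N,\mu,0)$, and Schwarz symmetrization — which preserves $\|w\|_{L^N}$, does not increase $\|\nabla w\|_{L^N}$, and leaves $\int\Phi_N(\mu|w|^{\frac{N}{N-1}})dx$ unchanged since $t\mapsto\Phi_N(\mu t^{\frac{N}{N-1}})$ is a power series in $t$ — identifies it with $B(N,\mu)$ of \eqref{LR-sup}, a radial maximizer of one being a maximizer of the other and conversely. For $N=2$ we have $\mu=\tfrac{2\alpha}{2-\beta}$ and $\alpha_{2,\beta}=\tfrac{2-\beta}{2}\alpha_2$, so the theorems of Ruf and Ishiwata (\cite{Ruf},\cite{Ishiwata}) — $B(2,\mu)$ is attained for $\alpha_*^{LR}<\mu\le\alpha_2$ and is not attained for small $\mu>0$ — translate directly into the second assertion of (ii) with $\alpha_*:=\tfrac{2-\beta}{2}\alpha_*^{LR}$, and into (iii): were $\tilde B_{rad}(2,\alpha,\beta,\beta)$ attained for small $\alpha$, the identity above plus symmetrization would make $B(2,\mu)$ attained for small $\mu$, a contradiction.

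For the remaining cases it suffices to show that $\tilde B^{AY}_{rad}(N,\mu,\beta')$ is attained for all $0<\mu\le\alpha_{N,\beta'}$ when $N\ge3$ (any $\beta'\in[0,N)$; for $\beta'=0$ this is Ruf's theorem \cite{Ruf}) and when $N=2$, $\beta'>0$; so from now on assume $\beta'>0$. I would argue by the direct method. Take a maximizing sequence $(w_k)$, which by the symmetrization above may be taken radial and radially non-increasing with $\|w_k\|_{W^{1,N}}=1$, and pass to $w_k\weakto w$ in $W^{1,N}(\re^N)$. Then: (a) \emph{no loss of mass}: the radial lemma gives $|w_k(x)|\le C|x|^{-1}$, so since $\beta'>0$ makes $|x|^{-\beta'}$ decay, no part of $\int\Phi_N(\mu|w_k|^{\frac{N}{N-1}})|x|^{-\beta'}dx$ can escape to infinity, and with the $L^N$-part of the norm this also rules out vanishing, for otherwise that integral tends to $0$. (b) \emph{no concentration at the origin}: if $|\nabla w_k|^{N}dx\weakto\delta_0$, a Carleson-Chang type blow-up analysis at the origin (as refined by Li and Li-Ruf), using the truncation $\Phi_N$ and the extremal profile of the singular functional, gives a sharp bound $\limsup_k\int\Phi_N(\alpha_{N,\beta'}|w_k|^{\frac{N}{N-1}})|x|^{-\beta'}dx\le\tfrac{\omega_{N-1}}{N-\beta'}\exp\!\big(\sum_{j=1}^{N-1}\tfrac1j+c_{N,\beta'}\big)$, while a suitable Moser-type test function supported near $0$ makes $\tilde B^{AY}_{rad}(N,\mu,\beta')$ strictly larger than this value; hence the maximizing sequence cannot concentrate (for $\mu<\alpha_{N,\beta'}$ the concentration contribution is a priori negligible, so this step is elementary). (c) By (a), (b) and the concentration-compactness principle, $w_k\to w$ strongly in $W^{1,N}(\re^N)$, hence $\|w\|_{W^{1,N}}=1$ and, by uniform integrability coming from the Adimurthi-Yang inequality, $\int\Phi_N(\mu|w|^{\frac{N}{N-1}})|x|^{-\beta'}dx=\tilde B^{AY}_{rad}(N,\mu,\beta')$; thus $w$ is a maximizer and $T^{-1}w$ maximizes $\tilde B_{rad}(N,\alpha,\beta,\gamma)$. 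This proves (i) and the first part of (ii).

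The first two paragraphs are essentially bookkeeping plus citation; the genuine obstacle is the sharp estimate in step~(b) at the \emph{critical} level $\mu=\alpha_{N,\beta'}$ (equivalently $\alpha=\alpha_{N,\beta}$): one must carry out the blow-up analysis of the maximizing sequence, identify the exact Carleson-Chang constant for the \emph{singular} Trudinger-Moser functional on the whole space, and build a test function that strictly beats it. The interplay of the truncation $\Phi_N$ (which forces one to work on the non-compact domain $\re^N$) with the singular weight $|x|^{-\beta'}$ is what makes this the technical heart of the argument, in the spirit of Carleson-Chang, Li, Li-Ruf and Adimurthi-Yang.
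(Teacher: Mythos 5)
Your first two paragraphs reproduce, essentially verbatim, the paper's own argument: your map $T$ is the radial restriction of the change of functions \eqref{eq:changefunct} (indeed $|F(x)|=|x|^{p}/p^{p}$ with $p=\frac{N}{N-\gamma}$), your constants check out against \eqref{eq:norm} and \eqref{eq:changeintegral*} ($c^{N}p^{N-1}=1$, the factor $p^{1-N+\beta'}$ equals $\left(\frac{N-\gamma}{N}\right)^{N-1+\frac{N(\gamma-\beta)}{N-\gamma}}$, $\mu=\frac{N}{N-\gamma}\alpha$, $\beta'=\tilde\beta$), and the identity $\tilde{B}_{rad}(N,\alpha,\beta,\gamma)=p^{1-N+\beta'}\,\tilde{B}_{rad}(N,\mu,\tilde\beta,0)$ with $T,T^{-1}$ carrying maximizers to maximizers is exactly how the paper obtains (i) and (ii). Your proof of (iii) does take a different route: you pull Ishiwata's non-attainment of $B(2,\mu)$ for small $\mu$ back through the radial isometry, using that symmetrization forces any radial maximizer of $B_{rad}(2,\mu)=B(2,\mu)$ to be a global maximizer; this is valid and shorter for the radial statement, whereas the paper runs Ishiwata's scaling/no-critical-point argument directly on the unit sphere of $X^{1,2}_{\beta}$ (Section 4) --- an argument it needs anyway for the non-radial Theorem \ref{Maximizers}(iii), where $T$ is no longer norm-preserving.

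The genuine gap is in your third paragraph. For $\beta>\gamma$, i.e. $\tilde\beta>0$, and in particular at the critical exponent $\alpha=\alpha_{N,\beta}$ (equivalently $\mu=\alpha_{N,\tilde\beta}$), attainability of the singular supremum $\tilde{B}(N,\mu,\tilde\beta,0)$ is not proved: your step (b) --- the sharp upper bound along concentrating sequences for the singular functional on all of $\re^{N}$ and the construction of a test function strictly exceeding it --- is only asserted, and you yourself label it ``the genuine obstacle''. This is not a routine adaptation of Carleson--Chang; it is precisely the content of the theorem of X.~Li and Y.~Yang \cite{LiYang} (quoted in the paper as Proposition \ref{maximizersingular}(iii)), whose blow-up analysis is a substantial piece of work in its own right. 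As written, your proof of (i) and of the first part of (ii) is therefore incomplete; it becomes complete (and coincides with the paper's) once the sketch (a)--(c) is replaced by a citation of \cite{LiYang}, together with \cite{Ishiwata,Li-Ruf} for the case $N\ge3$, $\tilde\beta=0$ (note \cite{Ruf} alone covers $N=2$, not $N\ge3$). The vanishing discussion in (a) would also need to be made precise (e.g. via the compactness of the embedding of radial non-increasing $W^{1,N}$-bounded sequences into $L^{N}(|x|^{-\tilde\beta}dx)$ coming from the decay $|w_k(x)|\le C|x|^{-1}$), but that is minor compared with (b).
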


\begin{theorem}
\label{Maximizers}
Let $N\geq 2$, $0\leq \gamma \leq \beta < N$. 
Then the following statements hold.
\begin{enumerate}
\item[(i)] If $N \geq 3$ then $\tilde{B}(N,\alpha,\beta,\gamma)$ is attained for any $0< \alpha \leq \alpha_{N,\beta}$.
\item[(ii)] If $N = 2$ then $\tilde{B}(2,\alpha,\beta,\gamma)$ is attained for any $0< \alpha \leq \alpha_{2,\beta}$ if $\beta > \gamma$, 
while there exists $\alpha_* > 0$ such that $\tilde{B}(2,\alpha,\beta,\beta)$ is attained for any $\alpha_* < \alpha < \alpha_{2,\beta}$.
\item[(iii)] $\tilde{B}(2,\alpha,\beta,\beta)$ is not attained for sufficiently small $\alpha > 0$.
\end{enumerate}
\end{theorem}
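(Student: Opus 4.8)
The strategy is to reduce Theorem~\ref{Maximizers} to its radial version, Theorem~\ref{Maximizers(radial)}, by means of the same \emph{transformation of functions} that underlies Theorem~\ref{Theorem:weighted LR}. Precisely, I will show that for $0\le\gamma\le\beta<N$ one has $\tilde{B}(N,\alpha,\beta,\gamma)=\tilde{B}_{rad}(N,\alpha,\beta,\gamma)$ for every $\alpha>0$ and, more importantly, that $\tilde{B}(N,\alpha,\beta,\gamma)$ is attained if and only if $\tilde{B}_{rad}(N,\alpha,\beta,\gamma)$ is. Granting this, (i) and (ii) follow from Theorem~\ref{Maximizers(radial)}(i),(ii), since a radial maximizer of $\tilde{B}_{rad}$ is a fortiori a maximizer of $\tilde{B}$; and (iii) follows because a maximizer of $\tilde{B}(2,\alpha,\beta,\beta)$ would produce, by the reduction, a radial maximizer of $\tilde{B}_{rad}(2,\alpha,\beta,\beta)$, contradicting Theorem~\ref{Maximizers(radial)}(iii).

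Set $a=\frac{N-\gamma}{N}\in(0,1]$ and define $\Phi:\re^N\to\re^N$ by $\Phi(y)=y\,|y|^{1/a-1}$, so that $|\Phi(y)|=|y|^{1/a}$, $\Phi$ preserves rays, and $\Phi^{-1}(x)=x\,|x|^{a-1}$. For $u\in X^{1,N}_{\gamma}(\re^N)$ put $\tilde u=u\circ\Phi$. A change of variables gives, for every $u$,
\[
\|\tilde u\|_N^N=a\,\|u\|_{N,\gamma}^N,\qquad
\int_{\re^N}\Phi_N\!\big(\alpha|u|^{\frac{N}{N-1}}\big)\frac{dx}{|x|^{\beta}}=\frac1a\int_{\re^N}\Phi_N\!\big(\alpha|\tilde u|^{\frac{N}{N-1}}\big)\frac{dy}{|y|^{\delta}},\qquad \delta:=N\,\frac{\beta-\gamma}{N-\gamma}\in[0,N).
\]
Decomposing $\nabla u$ into its radial and spherical parts and using that $D\Phi(y)$ acts as $a^{-1}|y|^{1/a-1}$ in the radial direction and as $|y|^{1/a-1}$ in the tangential ones, the same change of variables yields
\[
\|\nabla\tilde u\|_N^N=a\int_{\re^N}\big(a^{-2}|\partial_r u|^2+|x|^{-2}|\nabla_{S^{N-1}}u|^2\big)^{N/2}\,dx .
\]
Since $a\le1$—this is where the hypothesis $\gamma\ge0$ enters—it follows that $\|\nabla\tilde u\|_N^N\le a^{1-N}\|\nabla u\|_N^N$ for all $u$, with equality $\|\nabla u\|_N^N=a^{N-1}\|\nabla\tilde u\|_N^N$ when $u$ is radial. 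In particular $\tilde u\in W^{1,N}(\re^N)$ whenever $u\in X^{1,N}_{\gamma}(\re^N)$.

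Now I prove $\tilde{B}=\tilde{B}_{rad}$; the inequality $\ge$ is trivial since radial functions form a subclass. For $\le$, take $u$ admissible for $\tilde{B}(N,\alpha,\beta,\gamma)$, form $\tilde u=u\circ\Phi\in W^{1,N}(\re^N)$, let $\tilde u^{*}$ be its Schwarz symmetrization, and set $v=\tilde u^{*}\circ\Phi^{-1}$, which is radial. As $\delta\ge0$, the weight $|y|^{-\delta}$ equals its own symmetric decreasing rearrangement, so by the Hardy--Littlewood inequality and the second identity above (applied to $v$ and to $u$, noting $v\circ\Phi=\tilde u^{*}$),
\begin{align*}
\int_{\re^N}\Phi_N\!\big(\alpha|v|^{\frac{N}{N-1}}\big)\frac{dx}{|x|^{\beta}}
&=\frac1a\int_{\re^N}\Phi_N\!\big(\alpha|\tilde u^{*}|^{\frac{N}{N-1}}\big)\frac{dy}{|y|^{\delta}}\\
&\ge\frac1a\int_{\re^N}\Phi_N\!\big(\alpha|\tilde u|^{\frac{N}{N-1}}\big)\frac{dy}{|y|^{\delta}}
=\int_{\re^N}\Phi_N\!\big(\alpha|u|^{\frac{N}{N-1}}\big)\frac{dx}{|x|^{\beta}}.
\end{align*}
On the other hand, by P\'olya--Szeg\H{o} $\|\nabla\tilde u^{*}\|_N\le\|\nabla\tilde u\|_N$ and $\|\tilde u^{*}\|_N=\|\tilde u\|_N$; combining these with $\|\nabla v\|_N^N=a^{N-1}\|\nabla\tilde u^{*}\|_N^N$ (radial case), with $\|\nabla\tilde u\|_N^N\le a^{1-N}\|\nabla u\|_N^N$, and with $\|v\|_{N,\gamma}^N=a^{-1}\|\tilde u^{*}\|_N^N=a^{-1}\|\tilde u\|_N^N=\|u\|_{N,\gamma}^N$, one gets $\|\nabla v\|_N^N+\|v\|_{N,\gamma}^N\le\|\nabla u\|_N^N+\|u\|_{N,\gamma}^N\le1$. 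Hence $v$ is an admissible radial competitor with value at least that of $u$, so $\tilde{B}\le\tilde{B}_{rad}$ and equality holds; moreover, if $u$ is a maximizer of $\tilde{B}$, the associated $v$ is admissible, radial, and has value $\ge\tilde{B}=\tilde{B}_{rad}$, hence is a radial maximizer of $\tilde{B}_{rad}$. This completes the reduction and therefore, as explained, the proof of (i)--(iii).

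The computational core—and the only genuine obstacle—is the Dirichlet-term analysis of the transformation $\Phi$: one must carefully handle the anisotropy of $D\Phi$ (the factor $a^{-1}$ on the radial derivative against $1$ on the spherical derivative) to derive $\|\nabla\tilde u\|_N^N\le a^{1-N}\|\nabla u\|_N^N$ with equality in the radial class, and then check that running $u\mapsto\tilde u\mapsto\tilde u^{*}\mapsto v$ does not violate the full Sobolev-norm constraint. It is exactly the sign of this inequality—available because $a\le1$, i.e. $\gamma\ge0$—that is responsible for the restriction $0\le\gamma\le\beta<N$ in the non-radial statement, as opposed to the weaker $-\infty<\gamma\le\beta<N$ permitted in Theorem~\ref{Maximizers(radial)}.
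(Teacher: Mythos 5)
Your reduction is mathematically sound (modulo the standard technicalities of composing Sobolev functions with the radial power map $\Phi$ and of applying P\'olya--Szeg\H{o} and Hardy--Littlewood, which the paper also takes for granted), and your computations check out: with $a=\frac{N-\gamma}{N}\le 1$ one indeed gets $\|\tilde u\|_N^N=a\|u\|_{N,\gamma}^N$, the weighted-integral identity with $\delta=N(\beta-\gamma)/(N-\gamma)=\tilde\beta$, and $\|\nabla\tilde u\|_N^N\le a^{1-N}\|\nabla u\|_N^N$ with equality on radial functions, which is the same anisotropic phenomenon as the paper's pointwise inequality \eqref{eq:pointwiseineq} for the map $F$, only normalized differently (you keep $\alpha$ fixed and collect Jacobian factors, while \eqref{eq:changefunct} rescales the amplitude so that $\alpha\mapsto\frac{N}{N-\gamma}\alpha$ and $\|u\|_{N,\gamma}=\|v\|_N$ exactly). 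The organizational route differs from the paper's: for (i)--(ii) the paper does not pass through Theorem~\ref{Maximizers(radial)} at all, but transports a radial maximizer of the singular problem $\tilde B(N,\frac{N}{N-\gamma}\alpha,\tilde\beta,0)$ on $W^{1,N}(\re^N)$ (Proposition~\ref{maximizersingular}, Remark~\ref{remark1}, i.e.\ Ruf, Li--Ruf, Ishiwata, Li--Yang) back through \eqref{eq:changefunct}, using equality in \eqref{eq:pointwiseineq} for radial functions. Your explicit lemma that $\tilde B=\tilde B_{rad}$ with a two-way transfer of maximizers (symmetrization performed in the transformed coordinates, using $\delta\ge 0$) is a genuine addition that the paper only uses implicitly, via the rearrangement remark for the $\gamma=0$ problem; in substance, though, the existence parts rest on the same two ingredients in both treatments: the transformation and the radial symmetry of maximizers of the limiting singular problem.

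The real divergence is in (iii). The paper proves non-attainment of $\tilde B(2,\alpha,\beta,\beta)$ for small $\alpha$ \emph{directly}, by Ishiwata's scaling-orbit argument on $X^{1,2}_\beta$ (the derivative computation \eqref{d_dtau} combined with the weighted Gagliardo--Nirenberg-type estimate of Lemma~\ref{Lemma:Ishiwata:Lemma3.1}, itself a consequence of the weighted Adachi--Tanaka inequality), and then notes that the radial case is ``completely similar.'' You invert this dependence: you deduce (iii) from Theorem~\ref{Maximizers(radial)}(iii) via your maximizer-transfer lemma. This is not circular, since the radial theorem has its own independent proof, so your argument is acceptable as written; but be aware that the entire analytic content of (iii) --- the vanishing phenomenon for small $\alpha$ --- is outsourced to the cited radial statement, so your proof of (iii) is only a reduction. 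A self-contained treatment would still require the Ishiwata-type argument (in the radial class, which suffices thanks to your reduction), which is exactly what the paper carries out.
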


Next, we study the relation between the suprema of Adachi-Tanaka type and Li-Ruf type weighted Trudinger-Moser inequalities,
along the line of Lam-Lu-Zhang \cite{Lam-Lu-Zhang}.
Set $\tilde{B}_{rad}(N, \beta, \gamma) = \tilde{B}_{rad}(N, \alpha_{N, \beta},\beta, \gamma)$ in (\ref{weighted LR-sup(radial)}), 
and 
$\tilde{B}(N, \beta, \gamma) = \tilde{B}(N, \alpha_{N, \beta},\beta, \gamma)$ in (\ref{weighted LR-sup}), 
i.e.,
\begin{align}
\label{weighted LR-sup-critical(radial)}
	&\tilde{B}_{rad}(N, \beta, \gamma)
	= \sup_{u \in X^{1,N}_{\gamma, rad}(\re^N) \atop \| u \|_{X^{1,N}_{\gamma}} \le 1} \int_{\re^N} \Phi_N (\alpha_{N, \beta} |u|^{\frac{N}{N-1}}) \frac{dx}{|x|^{\beta}}, \\
\label{weighted LR-sup-critical}
	&\tilde{B}(N, \beta, \gamma)
	= \sup_{u \in X^{1,N}_{\gamma}(\re^N) \atop \| u \|_{X^{1,N}_{\gamma}} \le 1} \int_{\re^N} \Phi_N (\alpha_{N, \beta} |u|^{\frac{N}{N-1}}) \frac{dx}{|x|^{\beta}},
\end{align}
for $N, \beta, \gamma$ satisfying (\ref{assumption:weighted AT}).
Then $\tilde{B}_{rad}(N, \beta, \gamma) < \infty$, and $\tilde{B}(N, \beta, \gamma) < \infty$ if $\gamma \ge 0$, by Theorem \ref{Theorem:weighted LR}.

\begin{theorem}(Relation)
Assume (\ref{assumption:weighted AT}).
Then we have
\label{Theorem:relation}
\[
	\tilde{B}_{rad}(N, \beta, \gamma) = \sup_{\alpha \in (0, \alpha_{N,\beta})} \( \frac{1 - \(\frac{\alpha}{\alpha_{N,\beta}}\)^{N-1}}{\(\frac{\alpha}{\alpha_{N,\beta}}\)^{N-1}} \)^{\frac{N-\beta}{N-\gamma}} 
\tilde{A}_{rad}(N, \alpha, \beta, \gamma).
\]
Furthermore if $\gamma \ge 0$, we have
\[
	\tilde{B}(N, \beta, \gamma) = \sup_{\alpha \in (0, \alpha_{N,\beta})} \( \frac{1 - \(\frac{\alpha}{\alpha_{N,\beta}}\)^{N-1}}{\(\frac{\alpha}{\alpha_{N,\beta}}\)^{N-1}} \)^{\frac{N-\beta}{N-\gamma}} 
\tilde{A}(N, \alpha, \beta, \gamma).
\]
\end{theorem}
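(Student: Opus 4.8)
The strategy is to establish both identities by a direct two-sided comparison between the critical Li--Ruf type problem and the family of subcritical Adachi--Tanaka type problems, exploiting the dilation invariance of the Adachi--Tanaka quotient under $v\mapsto v(\lambda\cdot)$ together with the exact homogeneities of the three quantities $\|\nabla u\|_N$, $\|u\|_{N,\gamma}$ and $\int_{\re^N}\Phi_N(\,\cdot\,)\,|x|^{-\beta}dx$. I will spell out the radial case; the non-radial case under $0\le\gamma\le\beta<N$ is word for word the same after replacing $\tilde A_{rad},\tilde B_{rad}$ by $\tilde A,\tilde B$, using the non-radial Adachi--Tanaka inequality of Dong--Lu \cite{Dong-Lu} and the finiteness $\tilde B(N,\beta,\gamma)<\infty$ from \eqref{weighted LR}. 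Throughout, \eqref{assumption:weighted AT} provides $\gamma-N<0$ and $\tfrac{N-\beta}{N-\gamma}>0$, and \eqref{weighted AT(radial)} guarantees $\tilde A_{rad}(N,\alpha,\beta,\gamma)<\infty$ for every $\alpha\in(0,\alpha_{N,\beta})$.

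\emph{The inequality ``$\le$''.} Fix $u\in X^{1,N}_{\gamma,rad}(\re^N)$ with $\|u\|_{X^{1,N}_\gamma}\le1$ and $u\not\equiv0$. I would first observe that $0<\|\nabla u\|_N<1$: if $\nabla u\equiv0$ then $u$ is a.e. a constant, necessarily $0$ since a nonzero constant does not belong to $L^N(\re^N,|x|^{-\gamma}dx)$ when $\gamma<N$; and $\|\nabla u\|_N=1$ would force $\|u\|_{N,\gamma}=0$, i.e. $u\equiv0$. Put $a:=\|\nabla u\|_N^N\in(0,1)$, $v:=u/\|\nabla u\|_N$ (so $\|\nabla v\|_N=1$, $v\not\equiv0$) and $\alpha:=\alpha_{N,\beta}a^{1/(N-1)}\in(0,\alpha_{N,\beta})$. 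Since then $\alpha_{N,\beta}|u|^{N/(N-1)}=\alpha|v|^{N/(N-1)}$, the definition of $\tilde A_{rad}(N,\alpha,\beta,\gamma)$ yields
\[
\int_{\re^N}\Phi_N\big(\alpha_{N,\beta}|u|^{\frac{N}{N-1}}\big)\frac{dx}{|x|^{\beta}}\le \tilde A_{rad}(N,\alpha,\beta,\gamma)\,\|v\|_{N,\gamma}^{N\frac{N-\beta}{N-\gamma}}.
\]
Since $\|v\|_{N,\gamma}^N=\|u\|_{N,\gamma}^N/a\le(1-a)/a=\big(1-(\alpha/\alpha_{N,\beta})^{N-1}\big)\big/(\alpha/\alpha_{N,\beta})^{N-1}$ and $t\mapsto t^{(N-\beta)/(N-\gamma)}$ is increasing on $[0,\infty)$, taking the supremum over all admissible $u$ gives the inequality ``$\le$'' in the asserted identity.

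\emph{The inequality ``$\ge$''.} Conversely, fix $\alpha\in(0,\alpha_{N,\beta})$, set $a:=(\alpha/\alpha_{N,\beta})^{N-1}\in(0,1)$, and take any $v\in X^{1,N}_{\gamma,rad}(\re^N)$ with $\|\nabla v\|_N\le1$ and $v\not\equiv0$. For $\lambda>0$, a change of variables shows that $v_\lambda(x):=v(\lambda x)$ satisfies $\|\nabla v_\lambda\|_N=\|\nabla v\|_N$, $\|v_\lambda\|_{N,\gamma}^N=\lambda^{\gamma-N}\|v\|_{N,\gamma}^N$ and $\int_{\re^N}\Phi_N(\alpha|v_\lambda|^{\frac{N}{N-1}})|x|^{-\beta}dx=\lambda^{\beta-N}\int_{\re^N}\Phi_N(\alpha|v|^{\frac{N}{N-1}})|x|^{-\beta}dx$, so that the Adachi--Tanaka quotient is dilation invariant. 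Because $\gamma-N<0$, I can choose $\lambda>0$ with $\lambda^{\gamma-N}\|v\|_{N,\gamma}^N=(1-a)/a$; then $u:=a^{1/N}v_\lambda$ satisfies $\|\nabla u\|_N^N=a\|\nabla v\|_N^N\le a$ and $\|u\|_{N,\gamma}^N=a\cdot\frac{1-a}{a}=1-a$, hence $\|u\|_{X^{1,N}_\gamma}\le1$, while $\alpha_{N,\beta}|u|^{N/(N-1)}=\alpha|v_\lambda|^{N/(N-1)}$. Therefore, using $\|v_\lambda\|_{N,\gamma}^{N(N-\beta)/(N-\gamma)}=((1-a)/a)^{(N-\beta)/(N-\gamma)}$,
\[
\tilde B_{rad}(N,\beta,\gamma)\ge\int_{\re^N}\Phi_N\big(\alpha_{N,\beta}|u|^{\frac{N}{N-1}}\big)\frac{dx}{|x|^{\beta}}=\Big(\frac{1-a}{a}\Big)^{\frac{N-\beta}{N-\gamma}}\frac{\int_{\re^N}\Phi_N(\alpha|v|^{\frac{N}{N-1}})|x|^{-\beta}dx}{\|v\|_{N,\gamma}^{N\frac{N-\beta}{N-\gamma}}}.
\]
Taking the supremum over such $v$ produces exactly $\tilde A_{rad}(N,\alpha,\beta,\gamma)$ on the right, and then the supremum over $\alpha\in(0,\alpha_{N,\beta})$ gives ``$\ge$''; combining with the previous step proves the radial identity, and the non-radial identity follows by the same argument once $\gamma\ge0$.

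\emph{Expected main difficulty.} There is no deep obstacle here; the substance is entirely the exponent bookkeeping and the correct choice of the dilation parameter $\lambda$. The one point that might look delicate is the discrepancy between $\|\nabla v\|_N\le1$ and $\|\nabla v\|_N=1$ in the ``$\ge$'' part, but it is harmless: the crude bound $\|\nabla u\|_N^N=a\|\nabla v\|_N^N\le a$ already leaves enough room in the constraint $\|u\|_{X^{1,N}_\gamma}\le1$, so the construction works for every admissible $v$, and any extra slack only improves the estimate --- and cannot beat the matching upper bound. The hypothesis \eqref{assumption:weighted AT} enters only mildly: $\gamma<N$ ensures that $v\mapsto v_\lambda$ can realize any prescribed value of $\|v_\lambda\|_{N,\gamma}$, and $\gamma<N$, $\beta<N$ together make $t\mapsto t^{(N-\beta)/(N-\gamma)}$ increasing.
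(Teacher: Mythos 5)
Your proof is correct and follows essentially the same route as the paper: the ``$\ge$'' direction is exactly the paper's Lemma \ref{Lemma2} construction $u=a^{1/N}v(\lambda\cdot)$ with $a=(\alpha/\alpha_{N,\beta})^{N-1}$ and $\lambda$ tuned so that $\|u\|_{N,\gamma}^N=1-a$, and the ``$\le$'' direction is the paper's computation with $v=u/\|\nabla u\|_N$ and $\alpha=\alpha_{N,\beta}\|\nabla u\|_N^{N/(N-1)}$, done for an arbitrary admissible $u$ instead of along a maximizing sequence. The only cosmetic differences are that you bypass the normalization step (the paper's Lemma \ref{Lemma1}) by working directly with the Adachi--Tanaka quotient, and you justify explicitly the harmless edge cases $\|\nabla u\|_N\in\{0,1\}$ that the paper dismisses with ``we may assume''.
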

Note that this implies $\tilde{A}_{rad}(N, \alpha, \beta, \gamma) < \infty$ for $N, \beta, \gamma$ satisfying (\ref{assumption:weighted AT}),
and $\tilde{A}(N, \alpha, \beta, \gamma) < \infty$ if $0 \le \gamma \le \beta < N$, by Theorem \ref{Theorem:weighted LR}.

Furthermore, we prove how $\tilde{A}_{rad}(N, \alpha, \beta, \gamma)$ and $\tilde{A}(N, \alpha, \beta, \gamma)$ behaves as $\alpha$ approaches to $\alpha_{N, \beta}$ from the below:
\begin{theorem}(Asymptotic behavior of Adachi-Tanaka supremum)
\label{Theorem:asymptotic}
Assume (\ref{assumption:weighted AT}).
Then there exist positive constants $C_1, C_2$ (depending on $N$, $\beta$, and $\gamma$) such that for $\alpha$ close enough to $\alpha_{N, \beta}$,
the estimate
\[
	\( \frac{C_1}{1 - \(\frac{\alpha}{\alpha_{N,\beta}}\)^{N-1}} \)^{\frac{N-\beta}{N-\gamma}} \le \tilde{A}_{rad}(N, \alpha, \beta, \gamma) 
\le \( \frac{C_2}{1 - \(\frac{\alpha}{\alpha_{N,\beta}}\)^{N-1}} \)^{\frac{N-\beta}{N-\gamma}}
\]
holds.
Corresponding estimates hold true for $\tilde{A}(N, \alpha, \beta, \gamma)$ if $\gamma \ge 0$.
\end{theorem}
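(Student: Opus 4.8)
The plan is to read off the upper estimate from Theorem~\ref{Theorem:relation} together with the finiteness of $\tilde B$ from Theorem~\ref{Theorem:weighted LR}, and to get the lower estimate from an explicit family of Moser‑type functions whose concentration scale is tuned to the gap $\alpha_{N,\beta}-\alpha$.

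\textbf{Upper bound.} By Theorem~\ref{Theorem:relation}, for every $\alpha\in(0,\alpha_{N,\beta})$,
\[
\left(\frac{1-(\alpha/\alpha_{N,\beta})^{N-1}}{(\alpha/\alpha_{N,\beta})^{N-1}}\right)^{\frac{N-\beta}{N-\gamma}}\tilde A_{rad}(N,\alpha,\beta,\gamma)\le\tilde B_{rad}(N,\beta,\gamma)<\infty ,
\]
and since $(\alpha/\alpha_{N,\beta})^{N-1}<1$ this rearranges to
\[
\tilde A_{rad}(N,\alpha,\beta,\gamma)\le\tilde B_{rad}(N,\beta,\gamma)\left(\frac{1}{1-(\alpha/\alpha_{N,\beta})^{N-1}}\right)^{\frac{N-\beta}{N-\gamma}},
\]
i.e.\ the claimed bound with $C_2=\tilde B_{rad}(N,\beta,\gamma)^{\frac{N-\gamma}{N-\beta}}$. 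When $\gamma\ge0$, the same argument with the second identity of Theorem~\ref{Theorem:relation} and $\tilde B(N,\beta,\gamma)<\infty$ (again by Theorem~\ref{Theorem:weighted LR}) handles $\tilde A(N,\alpha,\beta,\gamma)$.

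\textbf{Lower bound.} For $t>0$ I would use the radial Moser function
\[
\phi_t(x)=\frac{1}{\omega_{N-1}^{1/N}}
\begin{cases}
t^{\frac{N-1}{N}}, & |x|\le e^{-t},\\
\dfrac{\log(1/|x|)}{t^{1/N}}, & e^{-t}\le|x|\le1,\\
0,& |x|\ge1.
\end{cases}
\]
A direct computation gives $\|\nabla\phi_t\|_N=1$ and, using $\gamma<N$,
\[
\|\phi_t\|_{N,\gamma}^N=\frac{t^{N-1}e^{-(N-\gamma)t}}{N-\gamma}+\frac1t\int_{e^{-t}}^1\big(\log(1/r)\big)^N r^{N-1-\gamma}\,dr\le\frac{C_{N,\gamma}}{t}\qquad(t\ge T_0),
\]
so in particular $\|\phi_t\|_{N,\gamma}\to0$ as $t\to\infty$. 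Since $\alpha_{N,\beta}=(N-\beta)\omega_{N-1}^{1/(N-1)}$, on $\{|x|\le e^{-t}\}$ one has $\alpha_{N,\beta}|\phi_t|^{\frac{N}{N-1}}=(N-\beta)t$, hence for $\alpha=\theta\,\alpha_{N,\beta}$ with $\theta\in(0,1)$,
\[
\int_{\re^N}\Phi_N\big(\alpha|\phi_t|^{\frac{N}{N-1}}\big)\frac{dx}{|x|^\beta}\ \ge\ \frac{\omega_{N-1}}{N-\beta}\,e^{-(N-\beta)t}\,\Phi_N\big(\theta(N-\beta)t\big).
\]
Now, given $\alpha$ close to $\alpha_{N,\beta}$, set $\theta=\alpha/\alpha_{N,\beta}$ and choose $t=t(\theta):=\frac{1}{(N-\beta)(1-\theta)}$ (which exceeds $T_0$ once $\alpha$ is close enough to $\alpha_{N,\beta}$); then $(N-\beta)(1-\theta)t=1$ while $\theta(N-\beta)t=\frac{\theta}{1-\theta}\to\infty$ as $\theta\to1^-$, so $\Phi_N(\theta(N-\beta)t)=e^{\theta(N-\beta)t}(1+o(1))$ and the right‑hand side above tends to $\frac{\omega_{N-1}}{(N-\beta)e}>0$, in particular it is $\ge c_0:=\frac{\omega_{N-1}}{2(N-\beta)e}$. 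Using $\phi_{t(\theta)}$ (radial, with $\|\nabla\phi_{t(\theta)}\|_N=1$) as a test function in (\ref{weighted AT-sup(radial)}) and the bound $\|\phi_{t(\theta)}\|_{N,\gamma}^N\le C_{N,\gamma}/t(\theta)=C_{N,\gamma}(N-\beta)(1-\theta)$ yields
\[
\tilde A_{rad}(N,\alpha,\beta,\gamma)\ \ge\ \frac{c_0}{\|\phi_{t(\theta)}\|_{N,\gamma}^{N\frac{N-\beta}{N-\gamma}}}\ \ge\ c_0\,\big(C_{N,\gamma}(N-\beta)\big)^{-\frac{N-\beta}{N-\gamma}}\frac{1}{(1-\theta)^{\frac{N-\beta}{N-\gamma}}}.
\]
Since $N-1\ge1$ and $0<\theta<1$ give $1-\theta\le1-\theta^{N-1}=1-(\alpha/\alpha_{N,\beta})^{N-1}$, one may replace $1-\theta$ by $1-(\alpha/\alpha_{N,\beta})^{N-1}$ in the denominator, which is the asserted lower bound for a suitable $C_1=C_1(N,\beta,\gamma)$. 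For the non‑radial statement, the inclusion $X^{1,N}_{\gamma,rad}(\re^N)\subset X^{1,N}_\gamma(\re^N)$ gives $\tilde A(N,\alpha,\beta,\gamma)\ge\tilde A_{rad}(N,\alpha,\beta,\gamma)$, so the same lower bound holds, while the matching upper bound is the one obtained above under $\gamma\ge0$.

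\textbf{Main obstacle.} The only nontrivial ingredient is the lower bound, whose core is the matching of scales: the concentration level $t$ of the Moser function has to be of exact order $(1-\alpha/\alpha_{N,\beta})^{-1}$, so that the tail of $\Phi_N$ on the concentration ball stays bounded below while the weighted norm $\|\phi_t\|_{N,\gamma}^N$ is of exact order $1-\alpha/\alpha_{N,\beta}$; once the scale is chosen correctly, keeping track of the precise power of $1-(\alpha/\alpha_{N,\beta})^{N-1}$ is routine bookkeeping.
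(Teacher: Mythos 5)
Your proposal is correct and takes essentially the same approach as the paper: the upper bound is read off from Theorem \ref{Theorem:relation} together with the finiteness of $\tilde{B}_{rad}(N,\beta,\gamma)$ (resp. $\tilde{B}(N,\beta,\gamma)$ when $\gamma\ge 0$), and the lower bound comes from testing the Adachi--Tanaka quotient with Moser functions concentrated at scale of order $(1-\alpha/\alpha_{N,\beta})^{-1}$. The only differences are cosmetic: you tune a continuous concentration parameter $t=\frac{1}{(N-\beta)(1-\alpha/\alpha_{N,\beta})}$ exactly, where the paper selects an integer $n$ with $\frac{1}{1-\alpha/\alpha_{N,\beta}}\le n\le\frac{2}{1-\alpha/\alpha_{N,\beta}}$, and you pass from $1-\alpha/\alpha_{N,\beta}$ to $1-(\alpha/\alpha_{N,\beta})^{N-1}$ by the elementary inequality $1-\theta\le 1-\theta^{N-1}$ rather than via the limit $\frac{1-x^{N-1}}{1-x}\to N-1$.
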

Note that the estimate from the above follows from Theorem \ref{Theorem:relation}.
On the other hand, we will see that the estimate from the below follows from a computation using the Moser sequence.

The organization of the paper is as follows:
In section 2, we prove Theorem \ref{Theorem:weighted LR}. 
Main tools are a transformation which relates a function in $X^{1,N}_\gamma(\re^N)$ to a function in $W^{1,N}(\re^N)$, 
and the singular Trudinger-Moser type inequality recently proved by Adimurthi and Yang \cite{Adimurthi-Yang}, see also de Souza and de O \cite{Souza-O}.
In section 3, we prove the existence part of Theorems \ref{Maximizers(radial)}, \ref{Maximizers} (i) (ii).
In section 4, we prove the nonexistence part of Theorem \ref{Maximizers(radial)}, \ref{Maximizers} (iii).
Finally in  section 5, we prove Theorem \ref{Theorem:relation} and Theorem \ref{Theorem:asymptotic}.
The letter $C$ will denote various positive constant which varies from line to line, but is independent of functions under consideration.

%%%%%%%%%%%%%%%%%%%%%%%%%%%%%%%%%%%%%%%%%%%%%%%%%%%%%%%%%%%%%%%%%%%%%%%%%%%%%%%%%%%%%%%%%%%%%%%%%%%%%%%%%%%%%%%%%%%
%
% \S2 Proof of Theorem \ref{Theorem:weighted LR}.
%
%%%%%%%%%%%%%%%%%%%%%%%%%%%%%%%%%%%%%%%%%%%%%%%%%%%%%%%%%%%%%%%%%%%%%%%%%%%%%%%%%%%%%%%%%%%%%%%%%%%%%%%%%%%%%%%%%%%

\section{Proof of Theorem \ref{Theorem:weighted LR}.}

In this section, we prove Theorem \ref{Theorem:weighted LR}.
We will use the following singular Trudinger-Moser inequality on the whole space $\mathbb R^N$:
For any $\beta \in [0, N)$, define
\begin{equation}
\label{singular TM-sup}
	\tilde{B}(N,\alpha,\beta,0) = \sup_{u \in W^{1,N}(\mathbb R^N), \atop \|u \|_{W^{1,N}} \leq 1} \int_{\mathbb R^N} \Phi_N(\alpha |u|^{\frac N{N-1}}) \frac{dx}{|x|^{\beta}}.
\end{equation}
Then it holds
\begin{equation}
\label{singular TM}
	\tilde{B}(N, \alpha, \beta, 0)
	\begin{cases}
	&< \infty, \quad \alpha \, \le \, \alpha_{N, \beta}, \\
	&= \infty, \quad \alpha > \alpha_{N, \beta}.
	\end{cases}
\end{equation}
Here $\| u \|_{W^{1,N}} = \( \|\nabla u\|_N^N + \|u\|_N^N \)^{1/N}$ denotes the full norm of the Sobolev space $W^{1,N}(\re^N)$.  
Note that the inequality \eqref{singular TM} was first established by Ruf \cite{Ruf} for the case $N =2$ and $\beta =0$. 
It then was extended to the case $N\geq 3$ and $\beta =0$ by Li and Ruf \cite{Li-Ruf}. 
The case $N\geq 2$ and $\beta \in (0,N)$ was proved by Adimurthi and Yang \cite{Adimurthi-Yang}, see also de Souza and de O \cite{Souza-O}.

\vspace{1em}\noindent
{\it Proof of Theorem \ref{Theorem:weighted LR}}: 
We define the vector-valued function $F$ by 
\[
	F(x) = \left(\frac {N-\gamma}N\right)^{\frac{N}{N-\gamma}}|x|^{\frac{\gamma}{N-\gamma}} x.
\]
Its Jacobian matrix is
\begin{align*}
	DF(x) &= \left(\frac {N-\gamma}N\right)^{\frac{N}{N-\gamma}}|x|^{\frac{\gamma}{N-\gamma}} \left( Id_{N} + \frac{\gamma}{N-\gamma} \frac{x}{|x|} \, \otimes \, \frac{x}{|x|}\right)\\
	&=\frac{N-\gamma}N |F(x)|^{\frac \gamma N}\left( Id_{N} + \frac{\gamma}{N-\gamma} \frac{x}{|x|} \, \otimes \, \frac{x}{|x|}\right). 
\end{align*}
where $Id_N$ denotes the $N\times N$ identity matrix and $v\otimes v = (v_i v_j)_{1 \le i, j \le N}$ denotes the matrix 
corresponding to the orthogonal projection onto the line generated by the unit vector $v = (v_1, \cdots, v_N) \in \re^N$, i.e., the map $x\mapsto (x\cdot v) v$. 
Since a matrix of the form $I + \alpha v \otimes v$, $\alpha \in \re$, has eigenvalues $1$ (with multiplicity $N-1$) and $1 + \alpha$ (with multiplicity $1$),
we see that
\begin{equation}\label{eq:JacobianF}
	det(DF(x)) = \left(\frac{N-\gamma}N\right)^{N-1} |F(x)|^{\gamma}. 
\end{equation}
Let $u \in X^{1,N}_\gamma(\mathbb R^N)$ be such that $\|u\|_{X^{1,N}_\gamma} \leq 1$. We introduce a change of functions as follows.
\begin{equation}\label{eq:changefunct}
	v(x) = \left(\frac{N-\gamma}N\right)^{\frac {N-1}N}\, u(F(x)).
\end{equation}
A simple calculation shows that
\begin{align*}
	\nabla v(x)&= \left(\frac {N-\gamma}N\right)^{\frac{N-1}N}DF(x) (\nabla u(F(x)))\\
	& = \left(\frac{N-\gamma}N\right)^{\frac{2N-1}N}|F(x)|^{\frac{\gamma}{N}} \left(\nabla u(F(x)) + \frac{\gamma}{N-\gamma}\left(\nabla u(F(x)) \cdot \frac x{|x|}\right) \frac x{|x|}\right),
\end{align*}
and hence
\[
	|\nabla v(x)|^2 =\left(\frac{N-\gamma}N\right)^{\frac{2(2N-1)}N}|F(x)|^{\frac{2\gamma}{N}} \left(|\nabla u(F(x))|^2 + \frac{\gamma(2N-\gamma)}{(N-\gamma)^2} \left(\nabla u(F(x)) \cdot \frac x{|x|}\right)^2\right).
\]
Since $\left(\nabla u(F(x)) \cdot \frac x{|x|}\right)^2 \leq |\nabla u(F(x))|^2$, we then have
\begin{equation}
\label{eq:pointwiseineq}
	|\nabla v(x)| \leq \left(\frac {N-\gamma}N\right)^{\frac{N-1}N} |F(x)|^{\frac\gamma{N}} |\nabla u(F(x))|= \left(\det (DF(x))\right)^{\frac1N} |\nabla u(F(x))|
\end{equation}
if $\gamma \ge 0$, with equality if and only if $\left(\nabla u(F(x)) \cdot \frac x{|x|}\right)^2 = |\nabla u(F(x))|^2$ when $\gamma >0$. 
If $\gamma =0$ the inequality \eqref{eq:pointwiseineq} is an equality. 
Note that the inequality \eqref{eq:pointwiseineq} does not hold if $\gamma < 0$ and $u$ is not radial function. 
In fact, a reversed inequality occurs in this case. Moreover, \eqref{eq:pointwiseineq} becomes an equality if $u$ is a radial function for any $-\infty < \gamma < N$. 
Integrating both sides of \eqref{eq:pointwiseineq} on $\mathbb R^N$, we obtain
\begin{equation}
\label{eq:comparenorm}
	\|\nabla v\|_N \leq \|\nabla u\|_N.
\end{equation}
Moreover, for any function $G$ on $[0,\infty)$, using the change of variables, we get
\begin{multline}
\label{eq:changeintegral}
	\int_{\mathbb R^N} G\left(|u(x)|^{\frac N{N-1}}\right) |x|^{-\delta} dx \\ 
	= \left(\frac {N-\gamma}N\right)^{N-1+ \frac{N(\gamma -\delta)}{N-\gamma}}\int_{\mathbb R^N} G\left(\frac N{N-\gamma} |v(y)|^{\frac N{N-1}}\right) |y|^{\frac{N(\gamma -\delta)}{N-\gamma}} dy.
\end{multline}
Consequently, by choosing $G(t) =t^{N-1}$ and $\delta =\gamma$, we get $\|u\|_{N,\gamma} = \|v\|_N$ and hence
\begin{equation}
\label{eq:norm}
	\|u\|_{X^{1,N}_\gamma}^N = \|\nabla u\|_N^N + \int_{\mathbb R^N} |u(x)|^{N} |x|^{-\gamma} dx \geq \|\nabla v\|_N^N + \|v\|_N^N=\|v\|_{W^{1,N}}^N.
\end{equation}
We remark again that \eqref{eq:comparenorm} and \eqref{eq:norm} become equalities if $u$ is radial function for any $\gamma < N$. 
Thus $\|v\|_{W^{1,N}} \leq 1$ if $\|u\|_{X^{1,N}_\gamma} \leq 1$. 
By choosing $G(t) = \Phi_N(\alpha t)$ and $\delta =\beta \geq \gamma$, we get
\begin{multline}
\label{eq:changeintegral*}
	\int_{\mathbb R^N} \Phi_N\left(\alpha|u(x)|^{\frac N{N-1}}\right) |x|^{-\beta} dx\\ 
= \left(\frac {N-\gamma}N\right)^{N-1+ \frac{N(\gamma -\beta)}{N-\gamma}}\int_{\mathbb R^N} \Phi_N\left(\frac N{N-\gamma} \alpha|v(y)|^{\frac N{N-1}}\right) |y|^{-\frac{N(\beta-\gamma)}{N-\gamma}} dy.
\end{multline}
Denote 
\begin{equation*}
\label{btilde}
	\tilde{\beta} = \frac{N(\beta-\gamma)}{N-\gamma} \in [0,N). 
\end{equation*}
By using \eqref{eq:norm} and \eqref{eq:changeintegral*} and applying the singular Trudinger-Moser inequality \eqref{singular TM}, 
we get
\begin{align*}
	&\sup_{u\in X^{1,N}_\gamma(\mathbb R^N), \|u\|_{X^{1,N}_\gamma} \leq 1} \int_{\mathbb R^N} \Phi_N\left(\alpha|u(x)|^{\frac N{N-1}}\right) |x|^{-\beta} dx\\
	&\leq\left(\frac {N-\gamma}N\right)^{N-1+ \frac{N(\gamma -\beta)}{N-\gamma}}\sup_{v\in W^{1,N}(\mathbb R^N), \|v\|_{W^{1,N}} \leq 1}\int_{\mathbb R^N} \Phi_N\left(\frac N{N-\gamma} \alpha|v(y)|^{\frac N{N-1}}\right) |y|^{-\tilde \beta} dy\\
	&=\left(\frac {N-\gamma}N\right)^{N-1+ \frac{N(\gamma -\beta)}{N-\gamma}} \tilde{B}\left(N, \frac{N}{N-\gamma} \alpha, \tilde \beta,0\right)\\
	&<\infty,
\end{align*}
since $\frac{N}{N-\gamma} \alpha \leq \frac{N}{N-\gamma} \alpha_{N,\beta} = \frac{N-\beta}{N-\gamma} \alpha_N = \(\frac{N-\tilde{\beta}}{N}\) \alpha_N = \alpha_{N,\tilde \beta}$.

If $u$ is radial then so is $v$. 
In this case, \eqref{eq:pointwiseineq}, \eqref{eq:comparenorm} become equalities, and hence so does \eqref{eq:norm}. 
Then the conclusion follows again from the singular Trudinger-Moser inequality \eqref{singular TM}.

We finish the proof of Theorem \ref{Theorem:weighted LR} by showing that $\tilde{B}(N, \alpha, \beta, \gamma) = \infty$ and $\tilde{B}_{rad}(N, \alpha, \beta, \gamma) = \infty$
when $\alpha > \alpha_{N, \beta}$. 
Since $\tilde{B}_{rad}(N, \alpha, \beta, \gamma) \leq \tilde{B}(N, \alpha, \beta, \gamma)$, it is enough to prove that $\tilde{B}_{rad}(N, \alpha, \beta, \gamma) = \infty$. 
Suppose the contrary that $\tilde{B}_{rad}(N, \alpha, \beta, \gamma) < \infty$ for some $\alpha > \alpha_{N,\beta}$.
Using again the transformation of functions \eqref{eq:changefunct} for radial functions $u \in X_{\gamma}^{1,N}$, 
we then have equalities in \eqref{eq:pointwiseineq}, \eqref{eq:comparenorm}, and hence in \eqref{eq:norm}. 
Evidently, the transformation of functions \eqref{eq:changefunct} is a bijection between $X^{1,N}_{\gamma,rad}$ and $W^{1,N}_{rad}$ and preserves the equality in \eqref{eq:norm}. 
Consequently, we have
\[
	\tilde{B}_{rad}(N, \alpha, \beta, \gamma) = \left(\frac {N-\gamma}N\right)^{N-1+ \frac{N(\gamma -\beta)}{N-\gamma}} \tilde{B}_{rad}\left(N, \frac{N}{N-\gamma} \alpha, \tilde \beta,0\right),
\]
with $\tilde{\beta} = \frac{N(\beta-\gamma)}{N-\gamma} \in [0,N)$. Hence $\tilde{B}_{rad}\left(N, \frac{N}{N-\gamma} \alpha, \tilde \beta,0\right) < \infty$. 
By rearrangement argument, we have
\[
	\tilde{B}\left(N, \frac{N}{N-\gamma} \alpha, \tilde \beta,0\right) = \tilde{B}_{rad}\left(N, \frac{N}{N-\gamma} \alpha, \tilde \beta,0\right) < \infty
\]
which violates the result of Adimurthi and Yang since $\frac{N}{N-\gamma} \alpha > \alpha_{N,\tilde{\beta}}$.

%%%%%%%%%%%%%%%%%%%%%%%%%%%%%%%%%%%%%%%%%%%%%%%%%%%%%%%%%%%%%%%%%%%%%%%%%%%%%%%%%%%%%%%%%%%%%%%%%%%%
%
% The proof of $B(N, \alpha, \beta, \gamma) = \infty$ when $\alpha > \alpha_{N, \beta}$
%
%%%%%%%%%%%%%%%%%%%%%%%%%%%%%%%%%%%%%%%%%%%%%%%%%%%%%%%%%%%%%%%%%%%%%%%%%%%%%%%%%%%%%%%%%%%%%%%%%%%%
For the later purpose, we also prove here directly $\tilde{B}_{rad}(N, \alpha, \beta, \gamma) = \infty$ when $\alpha > \alpha_{N, \beta}$
by using the weighted Moser sequence as in \cite{Ishiwata-Nakamura-Wadade}, \cite{Lam-Lu-Zhang}:
Let $-\infty < \gamma \le \beta < N$ and for $n \in \N$ set
\[
	A_n = \( \frac{1}{\omega_{N-1}} \)^{1/N} \( \frac{n}{N-\beta} \)^{-1/N}, \quad b_n = \frac{n}{N-\beta},
\]
so that $\( A_n b_n \)^{\frac{N}{N-1}} = n/\alpha_{N, \beta}$.
Put
\begin{align}
\label{Moser sequence}
	&u_n = 
	\begin{cases}
	A_n b_n, &\quad \text{if} \, |x| < e^{-b_n}, \\
	A_n \log (1/|x|), &\quad \text{if} \, e^{-b_n} < |x| < 1, \\
	0, &\quad \text{if} \, 1 \le |x|.
	\end{cases}
\end{align}
Then direct calculation shows that
\begin{align}
\label{Moser_estimates(1)}
	&\| \nabla u_n \|_{L^N(\re^N)} = 1, \\
\label{Moser_estimates(2)}
	&\| u_n \|^N_{N, \gamma} = \frac{N-\beta}{(N-\gamma)^{N+1}} \Gamma(N+1) (1/n) + o(1/n)
\end{align}
as $n \to \infty$.
Thus $u_n \in X^{1,N}_{\gamma, rad}(\re^N)$.
In fact for (\ref{Moser_estimates(2)}), we compute
\begin{align*}
	\| u_n \|_{N, \gamma}^N &= \omega_{N-1} \int_0^{e^{-b_n}} (A_n b_n)^N r^{N-1-\gamma} dr + \omega_{N-1} \int_{e^{-b_n}}^1 A_n^N (\log (1/r))^N r^{N-1-\gamma} dr \\
	&= I + II.
\end{align*}
We see
\[
	I = \omega_{N-1} (A_n b_n )^N \left[ \frac{r^{N-\gamma}}{N-\gamma} \right]_{r=0}^{r=e^{-b_n}} = \omega_{N-1} \( \frac{n}{\alpha_{N, \beta}} \)^{N-1} \frac{e^{-(\frac{N-\gamma}{N-\beta}) n} }{N-\gamma} = o(1/n)
\]
as $n \to \infty$.
Also
\begin{align*}
	II &= \( \frac{N-\beta}{n} \) \int_{e^{-b_n}}^1 (\log (1/r))^N r^{N-1-\gamma} dr \\
	&= \( \frac{N-\beta}{n} \) \int_0^{b_n} \rho^N e^{-(N-\gamma) \rho} d\rho
	= \frac{N-\beta}{(N-\gamma)^{N+1}} (1/n) \int_0^{(N-\gamma) b_n} \rho^N e^{-\rho} d\rho \\
	&= \frac{N-\beta}{(N-\gamma)^{N+1}} (1/n) \Gamma(N+1) + o(1/n).
\end{align*}
Thus we obtain (\ref{Moser_estimates(2)}).

Now, put $v_n(x) = \la_n u_n(x)$ where $u_n$ is the weighted Moser sequence in (\ref{Moser sequence}) and
$\la_n > 0$ is chosen so that $\la_n^N + \la_n^N \| u_n \|_{N, \gamma}^N = 1$.
Thus we have $\| \nabla v_n \|_{L^N}^N + \| v_n \|_{N,\gamma}^N = 1$ for any $n \in \N$.
By (\ref{Moser_estimates(2)}) with $\beta = \gamma$, we see that $\la_n^N = 1 - O(1/n)$ as $n \to \infty$.
For $\alpha > \alpha_{N, \beta}$, we calculate
\begin{align*}
	&\int_{\re^N} \Phi_N (\alpha |v_n|^{\frac{N}{N-1}}) \frac{dx}{|x|^{\beta}} 
	\ge \int_{\{ 0 \le |x| \le e^{-b_n} \}} \Phi_N (\alpha |v_n|^{\frac{N}{N-1}}) \frac{dx}{|x|^{\beta}} \\
	&= \int_{\{ 0 \le |x| \le e^{-b_n} \}} \( e^{\alpha |v_n|^{\frac{N}{N-1}}} - \sum_{j=0}^{N-2} \frac{\alpha^j}{j!} |v_n|^{\frac{Nj}{N-1}} \)
	\frac{dx}{|x|^{\beta}} \\
	&\ge \left\{ \exp \( \frac{n \alpha}{\alpha_{N, \beta}} \la_n^{\frac{N}{N-1}} \) - O(n^{N-1}) \right\} 
	\int_{\{ 0 \le |x| \le e^{-b_n} \}} \frac{dx}{|x|^{\beta}} \\
	&\ge \left\{ \exp \( \frac{n \alpha}{\alpha_{N,\beta}} \(1 - O\(\frac{1}{n^{\frac{1}{N-1}}}\)\) \) - O(n^{N-1}) \right\} \( \frac{\omega_{N-1}}{N-\beta} \) e^{-n} \to + \infty
\end{align*}
as $n \to \infty$.
Here we have used that for $0 \le |x| \le e^{-b_n}$, 
\[
	\alpha |v_n|^{\frac{N}{N-1}} = \alpha \la_n^{\frac{N}{N-1}} (A_n b_n)^{\frac{N}{N-1}} = \frac{n \alpha}{\alpha_{N, \beta}} \la_n^{\frac{N}{N-1}}
\]
by definition of $A_n$ and $b_n$. 
Also we used that for $0 \le |x| \le e^{-b_n}$, 
\[
	|v_n|^{\frac{Nj}{N-1}} = \la_n^{\frac{Nj}{N-1}} (A_n b_n)^{\frac{Nj}{N-1}} \le C n^j \le C n^{N-1}
\]
for $0 \le j \le N-2$ and $n$ is large.
This proves Theorem \ref{Theorem:weighted LR} completely.
\qed

%%%%%%%%%%%%%%%%%%%%%%%%%%%%%%%%%%%%%%%%%%%%%%%%%%%%%%%%%%%%%%%%%%%%%%%%%%%%%%%%%%%%%%%%%%%%%%%%%%%%%%%%%%%%%%%%%%%
%
% \S3 Existence of maximizers for the weighted Trudinger-Moser inequality
%
%%%%%%%%%%%%%%%%%%%%%%%%%%%%%%%%%%%%%%%%%%%%%%%%%%%%%%%%%%%%%%%%%%%%%%%%%%%%%%%%%%%%%%%%%%%%%%%%%%%%%%%%%%%%%%%%%%%

\section{Existence of maximizers for the weighted Trudinger-Moser inequality}

As explained in the Introduction, the existence and non-existence of maximizers for (\ref{singular TM-sup}) is well known. 
Now, let us recall it here.
\begin{prop}
\label{maximizersingular}
The following statements hold,
\begin{enumerate}
\item[(i)] If $N \geq 3$ then $\tilde{B}(N, \alpha,0,0)$ is attained for any $0 < \alpha \leq \alpha_N$ (see \cite{Ishiwata, Li-Ruf}).
\item[(ii)] If $N =2$, there exists $0 < \alpha_*< \alpha_2 =4\pi$ such that $\tilde{B}(2,\alpha,0,0)$ is attained for any $\alpha_* < \alpha \leq \alpha_2$ (see \cite{Ishiwata, Ruf}). 
\item[(iii)] If $\beta \in (0,N)$ and $N \geq 2$ then $\tilde{B}(N,\alpha,\beta,0)$ is attained for any $0< \alpha \leq \alpha_{N,\beta}$ (see \cite{LiYang}).
\item[(iv)] $\tilde{B}(2,\alpha,0,0)$ is not attained for any sufficiently small $\alpha >0$ (see \cite{Ishiwata}).
\end{enumerate}
\end{prop}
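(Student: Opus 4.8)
The plan is to prove all four assertions with the standard variational machinery for Trudinger-Moser functionals on $\re^N$: Schwarz symmetrization, the concentration-compactness alternative for a normalized maximizing sequence, and a Carleson-Chang type test-function lower bound. Parts (i)--(iii) are existence statements obtained by excluding the two bad alternatives (concentration at a point, and vanishing to spatial infinity); part (iv) is a non-existence statement in which the vanishing alternative genuinely occurs and must be identified explicitly.

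For (i) and (iii), I would start from a maximizing sequence $(u_k)$ with $\|u_k\|_{W^{1,N}}\le1$. Since $|x|^{-\beta}$ is radially nonincreasing, Schwarz symmetrization leaves $\|\nabla u_k\|_N$ and $\|u_k\|_N$ unincreased while not decreasing $\int_{\re^N}\Phi_N(\alpha|u_k|^{N/(N-1)})|x|^{-\beta}\,dx$, so I may assume each $u_k$ is radial and nonincreasing; then, along a subsequence, $u_k\weakto u$ in $W^{1,N}(\re^N)$ and a.e. If $u\not\equiv0$ and the sequence neither concentrates nor vanishes, a Brezis-Lieb type splitting of the energy, combined with the strict subadditivity of the problem, upgrades weak convergence to strong convergence in $W^{1,N}$, whence $u$ saturates the constraint and is a maximizer. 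The concentration scenario $|\nabla u_k|^N\weakto\delta_0$ is excluded by the Carleson-Chang bound \cite{Carleson-Chang}: the concentrated part of the energy then contributes at most a universal constant, so $\tilde B(N,\alpha,\beta,0)$ would be bounded above by an explicit number, and this is contradicted, for every $\alpha\le\alpha_{N,\beta}$, by inserting a Moser-type competitor as in \eqref{Moser sequence} and using its sharp energy estimates. The vanishing scenario is removed for $\beta\in(0,N)$ because $|x|^{-\beta}\to0$ at infinity, so a spreading sequence contributes only lower order and cannot be maximizing; for $N\ge3$ and $\beta=0$ it is removed because, writing $\Phi_N(t)=\frac{t^{N-1}}{(N-1)!}+\frac{t^N}{N!}+\cdots$, a Gagliardo-Nirenberg interpolation shows that along a vanishing sequence the next-order term $\int|u_k|^{N^2/(N-1)}\,dx$ is of order $\|\nabla u_k\|_N^{N/(N-1)}$, which dominates the $\|\nabla u_k\|_N^N$ loss coming from the normalization precisely when $N\ge3$; hence the vanishing level lies strictly below $\tilde B(N,\alpha,0,0)$ for all $\alpha>0$.

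For (ii) with $N=2$, $\beta=0$, the vanishing competitor is genuinely present. Testing with the dilations $u_\lambda(x)=u(\lambda x)$ (renormalized in the full norm) and expanding $\Phi_2(t)=e^t-1=t+O(t^2)$ shows that the vanishing value of the functional is controlled by the sharp Gagliardo-Nirenberg constant in $\re^2$; this identifies an explicit threshold $\alpha_*>0$ such that for $\alpha>\alpha_*$ every maximizing sequence beats all vanishing ones, so the dichotomy above yields a maximizer, while near the critical value $\alpha_2=4\pi$ the Carleson-Chang estimate again removes concentration, so the full range $\alpha_*<\alpha\le4\pi$ is covered. For (iv), when $N=2$ and $\alpha>0$ is small one shows conversely that $\tilde B(2,\alpha,0,0)$ equals precisely this vanishing value: it is approached along $u_\lambda$ as $\lambda\to0$ but attained by no admissible $u$, since a maximizer would solve an Euler-Lagrange equation whose associated Pohozaev-type identity is incompatible, for small $\alpha$, with optimality in the underlying Gagliardo-Nirenberg quotient.

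The main obstacle is the Carleson-Chang type estimate used to exclude concentration; this is exactly the step that sees the sharp value $\alpha_{N,\beta}$, and it requires a delicate expansion of the energy of near-optimal truncated-logarithm profiles, i.e., upgrading the crude Moser competitors of \eqref{Moser sequence} to an asymptotically optimal one. The two-dimensional case carries the additional difficulty of pinning down the threshold $\alpha_*$ and proving that vanishing sequences actually realize the supremum for small $\alpha$, which is the mechanism behind the non-attainability in (iv).
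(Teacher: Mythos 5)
This proposition is not proved in the paper at all: it is a list of known results, and the paper's ``proof'' consists of the citations \cite{Ruf}, \cite{Li-Ruf}, \cite{Ishiwata}, \cite{LiYang}. So there is no argument in the text to compare yours with; the only ingredient of Proposition \ref{maximizersingular} that the paper reworks is (iv), whose weighted analogue is proved in Section 4 by adapting Ishiwata's dilation argument.

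Judged as a proof, your proposal outlines the right general strategy of the cited works (symmetrization, the vanishing/concentration dichotomy, Ishiwata's analysis of the vanishing level, a Carleson--Chang type threshold), but the decisive steps are exactly the ones you defer, so there are genuine gaps. First, excluding concentration at the critical exponents $\alpha=\alpha_N$ in (i)--(ii) and $\alpha=\alpha_{N,\beta}$ in (iii) cannot be done with the crude Moser sequence \eqref{Moser sequence}: that sequence does not exceed the Carleson--Chang limiting value, and one needs either a refined expansion of adjusted (asymptotically optimal) test functions or, as in \cite{Li-Ruf} and \cite{LiYang}, a full blow-up analysis of Euler--Lagrange equations for subcritical or truncated problems; in particular item (iii) in \cite{LiYang} is obtained by blow-up analysis, not by the soft dichotomy you describe. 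You name this as the ``main obstacle'', but it is precisely the content of the theorems being invoked. Second, in (ii) the threshold $\alpha_*$ and the strict inequality ``supremum strictly above the vanishing level'' for $\alpha>\alpha_*$ are asserted rather than established; this comparison with the sharp Gagliardo--Nirenberg constant is the heart of Ishiwata's existence result, and the Brezis--Lieb/strict-subadditivity upgrade to strong convergence also needs to be carried out, not just quoted. Third, for (iv) your claim that $\tilde{B}(2,\alpha,0,0)$ \emph{equals} the vanishing value for small $\alpha$ is stronger than what \cite{Ishiwata} proves and is not needed: the actual argument (reproduced in weighted form in Section 4 of this paper) assumes a maximizer $v$ exists, differentiates $J_\alpha$ along the normalized dilation orbit $w_\tau$ at $\tau=1$, and uses a Gagliardo--Nirenberg/Adachi--Tanaka bound (the analogue of Lemma \ref{Lemma:Ishiwata:Lemma3.1}) to show this derivative is strictly negative for small $\alpha$, contradicting criticality; your ``Pohozaev identity incompatible with the Gagliardo--Nirenberg quotient'' is a plausible paraphrase of this but is not carried out. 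In short, the proposal is a faithful roadmap of the cited literature rather than a proof, whereas the paper deliberately imports these results as a black box.
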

The existence part (iii) of Proposition \ref{maximizersingular} is recently proved by X. Li, and Y. Yang \cite{LiYang} by a blow-up analysis.

\begin{remark}
\label{remark1}
By a rearrangement argument, the maximizers for \eqref{singular TM-sup}, if exist, must be a decreasing spherical symmetric function if $\beta \in (0,N)$ and up to a translation if $\beta =0$.
\end{remark}

The proofs of the existence part (i) (ii) of Theorem \ref{Maximizers(radial)} and \ref{Maximizers} are completely similar by using the formula of change of functions \eqref{eq:changefunct} 
and the results on the existence of maximizers for \eqref{singular TM-sup}.
%So we explain them simultaneously. 
So we prove Theorem \ref{Maximizers} only here.
As we have seen from the proof of Theorem \ref{Theorem:weighted LR} that
%\[
%	\tilde{B}_{rad}(N,\alpha,\beta,\gamma) \leq \left(\frac {N-\gamma}N\right)^{N-1+ \frac{N(\gamma -\beta)}{N-\gamma}} \tilde{B}\left(N, \frac{N}{N-\gamma} \alpha, \tilde \beta,0\right)
%\]
%for $N, \beta, \gamma$ satisfying (\ref{assumption:weighted AT}) and
\[
	\tilde{B}(N,\alpha,\beta,\gamma) \leq \left(\frac {N-\gamma}N\right)^{N-1+ \frac{N(\gamma -\beta)}{N-\gamma}} \tilde{B}\left(N, \frac{N}{N-\gamma} \alpha, \tilde \beta,0\right)
\]
if $0 \le \gamma \le \beta < N$, where $\tilde{\beta} = N(\beta-\gamma)/(N-\gamma) \in [0,N)$. 
If $N, \alpha, \beta$ and $\gamma$ satisfy the condition (i) and (ii) of Theorem \ref{Maximizers},
then $N$, $N \alpha/(N-\gamma)$ and $\tilde{\beta}$ satisfy the condition (i)--(iii) of Proposition \ref{maximizersingular}, 
hence there exists a maximizer $v \in W^{1,N}(\re^N)$ for $\tilde{B}\( N, \frac{N}{N-\gamma} \alpha, \tilde{\beta} ,0 \)$ with $\|v\|_N^N + \|\nabla v\|_N^N =1$ and
\[
	\int_{\mathbb R^N} \Phi_N\left(\frac N{N-\gamma} \alpha|v(y)|^{\frac N{N-1}}\right) |y|^{-\tilde \beta} dy = \tilde{B}\left(N, \frac{N}{N-\gamma} \alpha, \tilde \beta,0\right).
\]
As mentioned in Remark \ref{remark1}, we can assume that $v$ is a radial function. 
Let $u \in X^{1,N}_{\gamma}$ be a function defined by \eqref{eq:changefunct}. Note that $u$ is also a radial function, hence $\eqref{eq:pointwiseineq}$ becomes an equality. So do \eqref{eq:comparenorm} and \eqref{eq:norm}. Hence, we get
\[
	\|u\|_{X^{1,N}_\gamma}^N = \|\nabla v\|_N^N + \|v\|_N^N =1,
\]
and by \eqref{eq:changeintegral*}
\[
	\int_{\mathbb R^N} \Phi_N\left(\alpha|u(x)|^{\frac N{N-1}}\right) |x|^{-\beta} dx = \left(\frac {N-\gamma}N\right)^{N-1+ \frac{N(\gamma -\beta)}{N-\gamma}}\tilde{B}\left(N, \frac{N}{N-\gamma} \alpha, \tilde \beta,0\right).
\]
%This shows that $u$ is a maximizer for $\tilde{B}(N,\alpha,\beta,\gamma)$ or $\tilde{B}_{rad}(N,\alpha,\beta,\gamma)$ respectively.
This shows that $u$ is a maximizer for $\tilde{B}(N,\alpha,\beta,\gamma)$.
\qed

%%%%%%%%%%%%%%%%%%%%%%%%%%%%%%%%%%%%%%%%%%%%%%%%%%%%%%%%%%%%%%%%%%%%%%%%%%%%%%%%%%%%%%%%%%%%%%%%%%%%%%%%%%%%%%%%%%%
%
% \S4 Non-existence of maximizers for the weighted Trudinger-Moser inequality
%
%%%%%%%%%%%%%%%%%%%%%%%%%%%%%%%%%%%%%%%%%%%%%%%%%%%%%%%%%%%%%%%%%%%%%%%%%%%%%%%%%%%%%%%%%%%%%%%%%%%%%%%%%%%%%%%%%%%

\section{Non-existence of maximizers for the weighted Trudinger-Moser inequality}

In this section, we prove the non-existence part (iii) of Theorem \ref{Maximizers}.
The proof of (iii) of Theorem \ref{Maximizers(radial)} is completely similar.
We follow Ishiwata's argument in \cite{Ishiwata}.

%Assume $-\infty < \beta < 2$ and $0 < \alpha \le \alpha_{2, \beta} = 2\pi(2-\beta)$ and recall
Assume $0 \le \beta < 2$, $0 < \alpha \le \alpha_{2, \beta} = 2\pi(2-\beta)$ and recall
\begin{align*}
	&\tilde{B}(2, \alpha, \beta, \beta) 
	= \sup_{u \in X^{1,2}_{\beta}(\re^2) \atop \| u \|_{X^{1,2}_{\beta}(\re^2)} \le 1} \int_{\re^2} \( e^{\alpha u^2} -1 \) \frac{dx}{|x|^{\beta}}.
\end{align*}
We will show that $\tilde{B}(2, \alpha, \beta, \beta)$ is not attained if $\alpha > 0$ sufficiently small.
Set
\begin{align*}
	M = \left\{ u \in X^{1,2}_{\beta}(\re^2) \, : \, \| u \|_{X^{1,2}_{\beta}} =\( \| \nabla u \|_2^2 + \| u \|_{2, \beta}^2 \)^{1/2} = 1 \right\}
\end{align*}
be the unit sphere in the Hilbert space $X^{1,2}_{\beta}(\re^2)$ and
\begin{align*}
	J_{\alpha} :M \to \re, \quad J_{\alpha}(u) = \int_{\re^2} \( e^{\alpha u^2} - 1 \) \frac{dx}{|x|^{\beta}}
\end{align*}
be the corresponding functional defined on $M$.
Actually, we will prove the stronger claim that $J_{\alpha}$ has no critical point on $M$ when $\alpha > 0$ is sufficiently small. 

Assume the contrary that there existed $v \in M$ such that $v$ is a critical point of $J_{\alpha}$ on $M$.
Define an orbit on $M$ through $v$ as
\[
	v_{\tau}(x) = \sqrt{\tau} v(\sqrt{\tau} x) \quad \tau \in (0,\infty), \quad w_{\tau} = \frac{v_{\tau}}{\| v_{\tau} \|_{X^{1,2}_{\beta}}} \in M.
\]
Since $w_{\tau}|_{\tau = 1} = v$, we must have
\begin{equation}
\label{ddt=0}
	\frac{d}{d\tau} \Big|_{\tau = 1} J_{\alpha}(w_{\tau}) = 0.
\end{equation}
Note that
\[
	\| \nabla v_{\tau} \|_{L^2(\re^2)}^2 = \tau  \| \nabla v \|_{L^2(\re^2)}^2, \quad \| v_{\tau} \|_{p, \beta}^p = \tau^{\frac{p+\beta-2}{2}}\| v \|_{p, \beta}^p
\]
for $p > 1$.
Thus,
\begin{align*}
	&J_{\alpha}(w_{\tau}) = \int_{\re^2} \( e^{\alpha w_{\tau}^2} - 1 \) \frac{dx}{|x|^{\beta}} 
= \int_{\re^2} \sum_{j=1}^{\infty} \frac{\alpha^j}{j!} \frac{v_{\tau}^{2j}(x)}{\| v_{\tau} \|_{X^{1,2}_{\beta}}^{2j}} \frac{dx}{|x|^{\beta}} \\
	&= \sum_{j=1}^{\infty} \frac{\alpha^j}{j!} \frac{\| v_{\tau} \|_{2j, \beta}^{2j}}{\(\| \nabla v_{\tau} \|_2^2 + \| v_{\tau}\|_{2, \beta}^2 \)^j}
	= \sum_{j=1}^{\infty} \frac{\alpha^j}{j!}  \frac{\tau^{j -1 +\frac{\beta}{2}} \| v \|_{2j, \beta}^{2j}}{\( \tau \| \nabla v \|_2^2 + \tau^{\frac{\beta}{2}} \| v \|_{2,\beta}^2  \)^j}.
\end{align*}
By using an elementary computation
\begin{align*}
	&f(\tau) = \frac{\tau^{j-1 + \frac{\beta}{2}} c}{(\tau a + \tau^{\frac{\beta}{2}} b)^j}, \quad a = \| \nabla v \|_2^2, \, b = \| v \|_{2,\beta}^2, \, c = \| v \|_{2j,\beta}^{2j}, \\
	&f'(\tau) = (1- \frac{\beta}{2}) \frac{\tau^{j-2 + \frac{\beta}{2}} c}{(\tau a + \tau^{\frac{\beta}{2}} b)^{j+1}} \left\{ -\tau a + (j-1)b \right\},
\end{align*}
we estimate $\frac{d}{d\tau} \Big|_{\tau = 1} J_{\alpha}(w_{\tau})$: 
\begin{align}
\label{d_dtau}
	&\frac{d}{d\tau} \Big|_{\tau = 1} J_{\alpha}(w_{\tau}) \notag \\ 
	&= \sum_{j=1}^{\infty} \left[ \frac{\alpha^j}{j!} (1-\frac{\beta}{2}) \frac{\tau^{j-2 + \beta/2} \| v \|_{2j, \beta}^{2j}}{\( \tau \| \nabla v \|_2^2 + \tau^{\beta/2} \| v \|_{2,\beta}^2 \)^{j+1}} 
	\left\{ -\tau \| \nabla v \|_2^2 + (j-1) \| v \|_{2,\beta}^2 \right\} \right]_{\tau = 1} \notag \\
	&= - \alpha (1-\frac{\beta}{2}) \| \nabla v \|_2^2 \| v \|_{2, \beta}^2 
+ \sum_{j=2}^{\infty} \frac{\alpha^j}{j!} (1-\frac{\beta}{2}) \| v \|_{2j, \beta}^{2j} \left\{ -\| \nabla v \|_2^2 + (j-1) \| v \|_{2,\beta}^2 \right\} \notag \\
	&\le \alpha (1-\frac{\beta}{2}) \| \nabla v \|_2^2 \| v \|_{2,\beta}^2 
	\left\{ -1 + \sum_{j=2}^{\infty} \frac{\alpha^{j-1}}{(j-1)!} \frac{\| v \|_{2j, \beta}^{2j}}{\| \nabla v \|_2^2 \| v \|_{2,\beta}^2}  \right\},
\end{align}
since $-\| \nabla v \|_2^2 + (j-1) \| v \|_{2,\beta}^2 \le j$.

Now, we state a lemma.
Unweighted version of the next lemma is proved in \cite{Ishiwata}:Lemma 3.1,
and the proof of the next is a simple modification of the one given there 
using the weighted Adachi-Tanaka type Trudinger-Moser inequality:
\[
	\tilde{A}(2, \alpha, \beta, \beta) = \sup_{u \in X^{1,2}_{\beta}(\re^2) \setminus \{ 0 \} \atop \| \nabla u \|_{L^2(\re^2)} \le 1} \frac{1}{\| u \|^2_{2, \beta}} \int_{\re^2} \( e^{\alpha u^2} - 1 \) \frac{dx}{|x|^{\beta}} < \infty
\]
for $\alpha \in (0, \alpha_{2, \beta})$ if $\beta \ge 0$, and the expansion of the exponential function.
%
% Dong-Lu's result (weighted Adachi-Tanaka inequality for non-radial functions) is correct if $beta = \gamma \ge 0$.
%
\begin{lemma}
\label{Lemma:Ishiwata:Lemma3.1}
For any $\alpha \in (0, \alpha_{2, \beta})$, there exists $C_{\alpha} > 0$ such that
\[
	\| u \|_{2j,\beta}^{2j} \le C_{\alpha} \frac{j!}{\alpha^j}  \| \nabla u \|_2^{2j-2} \| u \|_{2, \beta}^2 
\]
holds for any $u \in X^{1,2}_{\beta}(\re^2)$ and $j \in \N$, $j \ge 2$.
\end{lemma}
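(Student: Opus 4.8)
The plan is to reduce the weighted $L^{2j}_\beta$-norm bound to the single nontrivial case $j=2$, exactly as in Ishiwata's unweighted argument, with the weighted Adachi--Tanaka inequality $\tilde A(2,\alpha,\beta,\beta)<\infty$ substituting for its unweighted counterpart. First I would fix $\alpha\in(0,\alpha_{2,\beta})$ and observe that by homogeneity it suffices to prove the inequality under the normalization $\|\nabla u\|_2=1$; a scaling argument then recovers the general statement. Indeed, for $t>0$ the rescaling $u\mapsto u_t(x):=u(tx)$ gives $\|\nabla u_t\|_2^2=t^{2-2}\|\nabla u\|_2^2=\|\nabla u\|_2^2$ in dimension $N=2$, while $\|u_t\|_{p,\beta}^p=t^{\beta-2}\|u\|_{p,\beta}^p$; thus both sides of the claimed inequality scale the same way (each side picks up a factor $t^{\beta-2}$), so no normalization is actually lost and one may assume $\|\nabla u\|_2\le 1$.

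Next, assuming $\|\nabla u\|_2\le 1$, the weighted Adachi--Tanaka inequality gives
\[
	\int_{\re^2}\left(e^{\alpha u^2}-1\right)\frac{dx}{|x|^\beta}\le \tilde A(2,\alpha,\beta,\beta)\,\|u\|_{2,\beta}^2.
\]
Expanding the left-hand side in its Taylor series $\sum_{j\ge 1}\frac{\alpha^j}{j!}\|u\|_{2j,\beta}^{2j}$ (all terms nonnegative, so term-by-term integration is justified by monotone convergence) and discarding all terms but the $j$-th, one gets, for every $j\ge 1$,
\[
	\frac{\alpha^j}{j!}\,\|u\|_{2j,\beta}^{2j}\le \tilde A(2,\alpha,\beta,\beta)\,\|u\|_{2,\beta}^2,
\]
i.e. $\|u\|_{2j,\beta}^{2j}\le C_\alpha\,\frac{j!}{\alpha^j}\,\|u\|_{2,\beta}^2$ with $C_\alpha=\tilde A(2,\alpha,\beta,\beta)$, whenever $\|\nabla u\|_2\le 1$. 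For general $u$, apply this to $u/\|\nabla u\|_2$ and multiply through: since $\|u/\|\nabla u\|_2\|_{2j,\beta}^{2j}=\|\nabla u\|_2^{-2j}\|u\|_{2j,\beta}^{2j}$ and $\|u/\|\nabla u\|_2\|_{2,\beta}^2=\|\nabla u\|_2^{-2}\|u\|_{2,\beta}^2$, rearranging yields exactly
\[
	\|u\|_{2j,\beta}^{2j}\le C_\alpha\,\frac{j!}{\alpha^j}\,\|\nabla u\|_2^{2j-2}\,\|u\|_{2,\beta}^2
\]
for all $u\in X^{1,2}_\beta(\re^2)$ and all $j\ge 2$ (indeed all $j\ge1$), as claimed.

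The only real input is the finiteness $\tilde A(2,\alpha,\beta,\beta)<\infty$ for $\alpha\in(0,\alpha_{2,\beta})$, which is the weighted Adachi--Tanaka--type inequality recorded before the lemma (a consequence of \cite{Ishiwata-Nakamura-Wadade}, \cite{Dong-Lu}); everything else is the elementary trick of extracting a single Taylor coefficient from a convergent series of nonnegative terms together with homogeneity. Thus there is no genuine obstacle: the main point to be careful about is simply the bookkeeping of the scaling exponents in $N=2$ and the justification of term-by-term integration, both of which are routine. This completes the proof. \qed
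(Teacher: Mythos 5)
Your proof is correct and follows exactly the route the paper indicates: the finiteness of the weighted Adachi--Tanaka constant $\tilde A(2,\alpha,\beta,\beta)$ for $\alpha\in(0,\alpha_{2,\beta})$, termwise expansion of $e^{\alpha u^2}-1$ (justified by monotone convergence) to isolate the $j$-th Taylor coefficient, and homogeneity via $u/\|\nabla u\|_2$ to remove the gradient normalization. The only cosmetic point is that your opening dilation remark is superfluous and does not itself justify assuming $\|\nabla u\|_2\le 1$ (in dimension two $u\mapsto u(tx)$ leaves $\|\nabla u\|_2$ unchanged); the correct reduction is the amplitude normalization you carry out in the final paragraph.
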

By this lemma, 
if we take $\alpha < \tilde{\alpha} < \alpha_{2,\beta}$ and put $C = C_{\tilde{\alpha}}$,
we see
\begin{align*}
	\frac{\| v \|_{2j, \beta}^{2j}}{\| \nabla v \|_2^2 \| v \|_{2,\beta}^2} \le C \frac{j!}{\tilde{\alpha}^j} \| \nabla v \|_{2j}^{2j-4}
\le C \frac{j!}{\tilde{\alpha}^j}
\end{align*}
for $j \ge 2$ since $v \in M$.
Thus we have
\[
	 \sum_{j=2}^{\infty} \frac{\alpha^{j-1}}{(j-1)!} \frac{\| v \|_{2j, \beta}^{2j}}{\| \nabla v \|_2^2 \| v \|_{2,\beta}^2} \le \sum_{j=2}^{\infty} \frac{C \alpha^{j-1}}{(j-1)!} \frac{j!}{\tilde{\alpha}^j} 
	= (\frac{C \alpha}{\tilde{\alpha}^2}) \sum_{j=2}^{\infty} \( \frac{\alpha}{\tilde{\alpha}} \)^{j-2} j \le \alpha C^{\prime}
\]
for some $C^{\prime} > 0$.
Inserting this into the former estimate (\ref{d_dtau}), we obtain
\begin{align*}
	\frac{d}{d\tau} \Big|_{\tau = 1} J_{\alpha}(w_{\tau}) \le (1-\frac{\beta}{2}) \alpha \| \nabla v \|_2^2 \| v \|_{2,\beta}^2 (-1 + C^{\prime}\alpha) < 0
\end{align*}
when $\alpha >0$ is sufficiently small.
This contradicts to (\ref{ddt=0}).
\qed

%%%%%%%%%%%%%%%%%%%%%%%%%%%%%%%%%%%%%%%%%%%%%%%%%%%%%%%%%%%%%%%%%%%%%%%%%%%%%%%%%%%%%%%%%%%%%%%%%%%%%%%%%%%%%%%%%%%
%
% \S5 Proof of Theorem \ref{Theorem:relation} and \ref{Theorem:asymptotic}
%
%%%%%%%%%%%%%%%%%%%%%%%%%%%%%%%%%%%%%%%%%%%%%%%%%%%%%%%%%%%%%%%%%%%%%%%%%%%%%%%%%%%%%%%%%%%%%%%%%%%%%%%%%%%%%%%%%%%

\section{Proof of Theorem \ref{Theorem:relation} and \ref{Theorem:asymptotic}.}

In this section, we prove Theorem \ref{Theorem:relation} and Theorem \ref{Theorem:asymptotic}.
As stated in the Introduction, we follow the argument by Lam-Lu-Zhang \cite{Lam-Lu-Zhang}.
First, we prepare several lemmata.

%%%%%%%%%%%%%%%%%%%%%%%%%%%%%%%%%%%%%%%%%%%%%%%%%%%%%%%%%%%%%%%%%%%%%%%%%%%%%%%%%%%%%%%%%%%%%%%%%%%%%%%%%%%%%%%%%%%
%
% Lemma \label{Lemma1}
%
%%%%%%%%%%%%%%%%%%%%%%%%%%%%%%%%%%%%%%%%%%%%%%%%%%%%%%%%%%%%%%%%%%%%%%%%%%%%%%%%%%%%%%%%%%%%%%%%%%%%%%%%%%%%%%%%%%%

\begin{lemma}
\label{Lemma1}
Assume (\ref{assumption:weighted AT}) and set
\begin{equation}
\label{A_hat}
	\widehat{A}(N, \alpha, \beta, \gamma) 
	= \sup_{{u \in X^{1,N}_{\gamma}(\re^N) \setminus \{ 0 \} \atop \| \nabla u \|_{L^N(\re^N)} \le 1} \atop \| u \|_{N, \gamma} = 1} 
	\int_{\re^N} \Phi_N (\alpha |u|^{\frac{N}{N-1}}) \frac{dx}{|x|^{\beta}}.
\end{equation}
Let $\tilde{A}(N, \alpha, \beta, \gamma)$ be defined as in (\ref{weighted AT-sup}).
Then $\tilde{A}(N, \alpha, \beta, \gamma) = \widehat{A}(N, \alpha, \beta, \gamma)$ for any $\alpha > 0$. 
Similarly, $\tilde{A}_{rad}(N, \alpha, \beta, \gamma) = \widehat{A}_{rad}(N, \alpha, \beta, \gamma)$ for any $\alpha > 0$,
where $\widehat{A}_{rad}(N, \alpha, \beta, \gamma)$ is defined similar to (\ref{A_hat}) and $\widehat{A}_{rad}(N, \alpha, \beta, \gamma)$ is defined in (\ref{weighted AT-sup(radial)}).
\end{lemma}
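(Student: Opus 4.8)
The plan is to exploit the scale invariance of the Adachi--Tanaka functional under the dilation $u \mapsto u_{\la}$, $u_{\la}(x) = u(\la x)$, $\la > 0$. First I would record the three elementary scaling identities: $\|\nabla u_{\la}\|_N = \|\nabla u\|_N$ (the gradient $L^N$-norm is dilation invariant in $\re^N$), $\|u_{\la}\|_{N,\gamma}^N = \la^{\gamma - N}\|u\|_{N,\gamma}^N$, and
\[
	\int_{\re^N}\Phi_N\!\left(\alpha|u_{\la}|^{\frac{N}{N-1}}\right)\frac{dx}{|x|^{\beta}} = \la^{\beta - N}\int_{\re^N}\Phi_N\!\left(\alpha|u|^{\frac{N}{N-1}}\right)\frac{dx}{|x|^{\beta}}.
\]
Combining these with the identity $(\gamma - N)\cdot\frac{N-\beta}{N-\gamma} = \beta - N$, one sees that the functional appearing in (\ref{weighted AT-sup}),
\[
	F(u) = \frac{1}{\|u\|_{N,\gamma}^{N(\frac{N-\beta}{N-\gamma})}}\int_{\re^N}\Phi_N\!\left(\alpha|u|^{\frac{N}{N-1}}\right)\frac{dx}{|x|^{\beta}},
\]
satisfies $F(u_{\la}) = F(u)$ for every $u \in X^{1,N}_{\gamma}(\re^N)\setminus\{0\}$ and every $\la > 0$.

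The inequality $\widehat{A}(N,\alpha,\beta,\gamma) \le \tilde{A}(N,\alpha,\beta,\gamma)$ is then immediate, since on the admissible set in (\ref{A_hat}) one has $\|u\|_{N,\gamma} = 1$, so the integral there equals $F(u)$, and the constraint $\|\nabla u\|_N \le 1$ is common to both problems. For the reverse inequality, given any $u \in X^{1,N}_{\gamma}(\re^N)$ with $\|\nabla u\|_N \le 1$ and $u \ne 0$, I would choose $\la = \|u\|_{N,\gamma}^{N/(N-\gamma)} > 0$ — this is a well-defined positive number precisely because $N - \gamma > 0$ under (\ref{assumption:weighted AT}) — so that $\|u_{\la}\|_{N,\gamma} = 1$. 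Since dilation leaves $\|\nabla u\|_N$ unchanged, $u_{\la}$ is admissible for $\widehat{A}$, whence
\[
	F(u) = F(u_{\la}) = \int_{\re^N}\Phi_N\!\left(\alpha|u_{\la}|^{\frac{N}{N-1}}\right)\frac{dx}{|x|^{\beta}} \le \widehat{A}(N,\alpha,\beta,\gamma).
\]
Taking the supremum over all such $u$ gives $\tilde{A}(N,\alpha,\beta,\gamma) \le \widehat{A}(N,\alpha,\beta,\gamma)$, and hence equality.

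The radial statement follows verbatim, because the dilation $u \mapsto u_{\la}$ maps $X^{1,N}_{\gamma,rad}(\re^N)$ into itself, so the same normalizing $\la$ produces a radial competitor for $\widehat{A}_{rad}$. I do not expect any genuine obstacle in this lemma; the only points that need care are the exponent bookkeeping in the three scaling identities and the verification that $\la$ is a positive real number, which is exactly where the hypothesis $\gamma < N$ enters.
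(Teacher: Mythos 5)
Your proof is correct and follows essentially the same route as the paper: both use the dilation $u_{\la}(x)=u(\la x)$, the scaling identities for $\|\nabla u\|_N$, $\|u\|_{N,\gamma}$ and the weighted integral, and the normalizing choice $\la=\|u\|_{N,\gamma}^{N/(N-\gamma)}$ to reduce to the constrained supremum $\widehat{A}$, with the other inequality being trivial. The exponent bookkeeping (e.g. $(\gamma-N)\tfrac{N-\beta}{N-\gamma}=\beta-N$) and the radial case are handled exactly as in the paper.
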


\begin{proof}
For any	$u \in X^{1,N}_{\gamma}(\re^N) \setminus \{ 0 \}$ and $\la >0$, we put $u_{\la}(x) = u(\la x)$ for $x \in \re^N$.
Then it is easy to see that
\begin{equation}
\label{scaling}
	\begin{cases}
	&\| \nabla u_{\la} \|_{L^N(\re^N)}^N = \| \nabla u \|_{L^N(\re^N)}^N, \\ 
	&\| u_{\la} \|^N_{N, \gamma} = \la^{-(N-\gamma)} \| u \|^N_{N, \gamma}.
	\end{cases}
\end{equation}
Thus for any $u \in X^{1,N}_{\gamma}(\re^N) \setminus \{ 0 \}$ with $\| \nabla u \|_{L^N(\re^N)} \le 1$,
if we choose $\la = \| u \|^{N/(N-\gamma)}_{N, \gamma}$, then $u_{\la} \in X^{1,N}_{\gamma}(\re^N)$ satisfies 
\[
	\| \nabla u_{\la} \|_{L^N(\re^N)} \le 1  \quad \text{and} \quad \| u_{\la} \|^N_{N, \gamma} = 1.
\]
Thus 
\[
	\widehat{A}(N, \alpha, \beta, \gamma) \ge
	\int_{\re^N} \Phi_N (\alpha |u_{\la}|^{\frac{N}{N-1}}) \frac{dx}{|x|^{\beta}} =
	\frac{1}{\| u \|_{N, \gamma}^{\frac{N(N-\beta)}{N-\gamma}}} \int_{\re^N} \Phi_N (\alpha |u|^{\frac{N}{N-1}}) \frac{dx}{|x|^{\beta}} 
\] 
which implies $\widehat{A}(N, \alpha, \beta, \gamma) \ge \tilde{A}(N, \alpha, \beta, \gamma)$.
The opposite inequality is trivial.
\end{proof}

%%%%%%%%%%%%%%%%%%%%%%%%%%%%%%%%%%%%%%%%%%%%%%%%%%%%%%%%%%%%%%%%%%%%%%%%%%%%%%%%%%%%%%%%%%%%%%%%%%%%%%%%%%%%%%%%%%%
%
% Lemma \label{Lemma2}
%
%%%%%%%%%%%%%%%%%%%%%%%%%%%%%%%%%%%%%%%%%%%%%%%%%%%%%%%%%%%%%%%%%%%%%%%%%%%%%%%%%%%%%%%%%%%%%%%%%%%%%%%%%%%%%%%%%%%
\begin{lemma}
\label{Lemma2}
Assume (\ref{assumption:weighted AT}) and set $\tilde{B}(N, \beta, \gamma)$ as in (\ref{weighted LR-sup-critical}).
Then we have
\[
	 \tilde{A}(N, \alpha, \beta, \gamma) \le \( \frac{\(\frac{\alpha}{\alpha_{N,\beta}}\)^{N-1}}{1 - \(\frac{\alpha}{\alpha_{N,\beta}}\)^{N-1}} \)^{\frac{N-\beta}{N-\gamma}} \tilde{B}(N, \beta, \gamma)
\]
for any $0 < \alpha < \alpha_{N, \beta}$.
The same relation holds for $\tilde{A}_{rad}(N, \alpha, \beta, \gamma)$ in (\ref{weighted AT-sup(radial)}) and $\tilde{B}_{rad}(N, \beta, \gamma)$ in (\ref{weighted LR-sup-critical(radial)}).
\end{lemma}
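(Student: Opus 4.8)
The plan is to run the Lam-Lu-Zhang argument \cite{Lam-Lu-Zhang}: out of a function admissible for the subcritical weighted Adachi-Tanaka problem one builds, by a two-parameter rescaling, a competitor for the critical weighted Li-Ruf problem $\tilde{B}(N,\beta,\gamma)$. Fix $\alpha\in(0,\alpha_{N,\beta})$. By Lemma \ref{Lemma1} it suffices to prove, uniformly in $u\in X^{1,N}_\gamma(\re^N)$ with $\|\nabla u\|_N\le1$ and $\|u\|_{N,\gamma}=1$, the estimate
\[
\int_{\re^N}\Phi_N\left(\alpha|u|^{\frac{N}{N-1}}\right)\frac{dx}{|x|^\beta}\le\left(\frac{(\alpha/\alpha_{N,\beta})^{N-1}}{1-(\alpha/\alpha_{N,\beta})^{N-1}}\right)^{\frac{N-\beta}{N-\gamma}}\tilde{B}(N,\beta,\gamma),
\]
since the supremum of the left-hand side over such $u$ equals $\widehat{A}(N,\alpha,\beta,\gamma)=\tilde{A}(N,\alpha,\beta,\gamma)$ by Lemma \ref{Lemma1}.

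Given such a $u$, set $v(x)=\mu\,u(\lambda x)$ with parameters $\mu\in(0,1)$, $\lambda>0$ to be fixed. By the scaling identities (\ref{scaling}), $\|\nabla v\|_N^N=\mu^N\|\nabla u\|_N^N\le\mu^N$ and $\|v\|_{N,\gamma}^N=\mu^N\lambda^{-(N-\gamma)}$, hence $\|v\|_{X^{1,N}_\gamma}^N\le\mu^N(1+\lambda^{-(N-\gamma)})$. On the other hand $\Phi_N\left(\alpha_{N,\beta}|v|^{\frac{N}{N-1}}\right)=\Phi_N\left(\alpha_{N,\beta}\mu^{\frac{N}{N-1}}|u(\lambda\,\cdot)|^{\frac{N}{N-1}}\right)$, so I choose $\mu$ so that $\alpha_{N,\beta}\mu^{\frac{N}{N-1}}=\alpha$, i.e.\ $\mu^N=(\alpha/\alpha_{N,\beta})^{N-1}<1$, and then $\lambda$ so that $\mu^N(1+\lambda^{-(N-\gamma)})=1$, whose unique positive solution is
\[
\lambda^{N-\gamma}=\frac{(\alpha/\alpha_{N,\beta})^{N-1}}{1-(\alpha/\alpha_{N,\beta})^{N-1}},
\]
well defined exactly because $0<\alpha<\alpha_{N,\beta}$. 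With these choices $\|v\|_{X^{1,N}_\gamma}\le1$, and $v$ is radial whenever $u$ is, so $v$ is admissible for $\tilde{B}(N,\beta,\gamma)$.

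It then remains to unwind the integrals. By the definition of $\tilde{B}(N,\beta,\gamma)$ and the change of variables $y=\lambda x$,
\[
\tilde{B}(N,\beta,\gamma)\ge\int_{\re^N}\Phi_N\left(\alpha_{N,\beta}|v|^{\frac{N}{N-1}}\right)\frac{dx}{|x|^\beta}=\int_{\re^N}\Phi_N\left(\alpha|u(\lambda x)|^{\frac{N}{N-1}}\right)\frac{dx}{|x|^\beta}=\lambda^{-(N-\beta)}\int_{\re^N}\Phi_N\left(\alpha|u|^{\frac{N}{N-1}}\right)\frac{dx}{|x|^\beta},
\]
so $\int_{\re^N}\Phi_N(\alpha|u|^{\frac{N}{N-1}})\frac{dx}{|x|^\beta}\le\lambda^{N-\beta}\tilde{B}(N,\beta,\gamma)$. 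Since $\lambda^{N-\beta}=(\lambda^{N-\gamma})^{\frac{N-\beta}{N-\gamma}}$, substituting the value of $\lambda^{N-\gamma}$ recovers exactly the claimed constant; taking the supremum over $u$ and applying Lemma \ref{Lemma1} once more finishes the proof. The radial statement is proved verbatim, with $\tilde{B},\tilde{A}$ replaced by $\tilde{B}_{rad},\tilde{A}_{rad}$, since the dilation $u\mapsto u(\lambda\,\cdot)$ and scalar multiplication preserve radial symmetry.

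There is no serious analytic obstacle here: the whole argument is the bookkeeping of the two-parameter family $v(x)=\mu u(\lambda x)$. The one point to get right is that the two demands on $v$ — matching the constant inside the exponential to $\alpha$ and landing in the closed unit ball of $X^{1,N}_\gamma$ — can be satisfied simultaneously with $\mu\in(0,1)$ and $\lambda>0$, which is precisely where the subcriticality $\alpha<\alpha_{N,\beta}$ enters; and if $\|\nabla u\|_N<1$ strictly, the same $\mu,\lambda$ still yield $\|v\|_{X^{1,N}_\gamma}\le1$, so normalizing $\|\nabla u\|_N=1$ via Lemma \ref{Lemma1} costs nothing.
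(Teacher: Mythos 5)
Your proof is correct and follows essentially the same route as the paper: the same rescaled test function $v(x)=\mu\,u(\lambda x)$ with $\mu^N=(\alpha/\alpha_{N,\beta})^{N-1}$ and $\lambda^{N-\gamma}=\mu^N/(1-\mu^N)$, tested against $\tilde{B}(N,\beta,\gamma)$ and combined with Lemma \ref{Lemma1}. The only differences are notational (the paper writes $C$ for your $\mu$), so nothing further is needed.
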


\begin{proof}
Choose any $u \in X^{1,N}_{\gamma}$ with $\| \nabla u \|_{L^N(\re^N)} \le 1$ and  $\| u \|_{N, \gamma} = 1$. 
Put $v(x) = C u(\la x)$ where $C \in (0,1)$ and $\la > 0$ are defined as 
\[
	C = \( \frac{\alpha}{\alpha_{N, \beta}} \)^{\frac{N-1}{N}} \quad \text{and} \quad
	\la = \( \frac{C^N}{1 - C^N} \)^{1/(N-\gamma)}.
\]
Then by scaling rules (\ref{scaling}), we see
\begin{align*}
	\| v \|^N_{X^{1,N}_{\gamma}} &= \| \nabla v \|^N_N + \| v \|^N_{N, \gamma}
	= C^N \| \nabla u \|^N_N + \la^{-(N-\gamma)} C^N \| u \|^N_{N, \gamma} \\
	&\le C^N + \la^{-(N-\gamma)} C^N = 1.
\end{align*}
Also we have
\begin{align*}
	\int_{\re^N} \Phi_N (\alpha_{N, \beta} |v|^{\frac{N}{N-1}}) \frac{dx}{|x|^{\beta}} &=
	\la^{-(N-\beta)} \int_{\re^N} \Phi_N \( \alpha_{N, \beta} C^{\frac{N}{N-1}} |u|^{\frac{N}{N-1}} \) \frac{dx}{|x|^{\beta}} \\
	&= \la^{-(N-\beta)} \int_{\re^N} \Phi_N \( \alpha |u|^{\frac{N}{N-1}} \) \frac{dx}{|x|^{\beta}}.
\end{align*}
Thus testing $\tilde{B}(N, \beta, \gamma)$ by $v$, we see
\[
	\tilde{B}(N, \beta, \gamma) \ge \( \frac{1 - C^N}{C^N} \)^{\frac{N-\beta}{N-\gamma}} \int_{\re^N} \Phi_N \( \alpha |u|^{\frac{N}{N-1}} \) \frac{dx}{|x|^{\beta}}.
\]
By taking the supremum for $u \in X^{1,N}_{\gamma}$ with $\| \nabla u \|_{L^N(\re^N)} \le 1$ and  $\| u \|_{N, \gamma} = 1$,
we have
\[
	 \tilde{B}(N, \beta, \gamma) \ge \( \frac{1 - \(\frac{\alpha}{\alpha_{N,\beta}}\)^{N-1}}{\(\frac{\alpha}{\alpha_{N,\beta}}\)^{N-1}} \)^{\frac{N-\beta}{N-\gamma}} \widehat{A}(N, \alpha, \beta, \gamma).
\]
Finally, Lemma \ref{Lemma1} implies the result.
The proof of
\[
	 \tilde{B}_{rad}(N, \beta, \gamma) \ge \( \frac{1 - \(\frac{\alpha}{\alpha_{N,\beta}}\)^{N-1}}{\(\frac{\alpha}{\alpha_{N,\beta}}\)^{N-1}} \)^{\frac{N-\beta}{N-\gamma}} \widehat{A}_{rad}(N, \alpha, \beta, \gamma)
\]
is similar.
\end{proof}

%%%%%%%%%%%%%%%%%%%%%%%%%%%%%%%%%%%%%%%%%%%%%%%%%%%%%%%%%%%%%%%%%%%%%%%%%%%%%%%%%%%%%%%%%%%%%%%%%%%%%%%%%%%%%%%%%%%
%
% Proof of Theorem \ref{Theorem:relation}
%
%%%%%%%%%%%%%%%%%%%%%%%%%%%%%%%%%%%%%%%%%%%%%%%%%%%%%%%%%%%%%%%%%%%%%%%%%%%%%%%%%%%%%%%%%%%%%%%%%%%%%%%%%%%%%%%%%%%

\vspace{1em}\noindent
{\it Proof of Theorem \ref{Theorem:relation}}:
We prove the relation between $\tilde{B}(N, \beta, \gamma)$ and $\tilde{A}(N, \alpha, \beta, \gamma)$ only.
The assertion that 
\[
	\tilde{B}(N, \beta, \gamma) \ge \sup_{\alpha \in (0, \alpha_{N, \beta})} \( \frac{1 - \(\frac{\alpha}{\alpha_{N,\beta}}\)^{N-1}}{\(\frac{\alpha}{\alpha_{N,\beta}}\)^{N-1}} \)^{\frac{N-\beta}{N-\gamma}} 
\tilde{A}(N, \alpha, \beta, \gamma)
\]
follows from Lemma \ref{Lemma2}. 
Note that $\tilde{B}(N, \beta,\gamma) < \infty$ when $0 \le \gamma \le \beta < N$ by Theorem \ref{Theorem:weighted LR}.

Let us prove the opposite inequality.
Let $\{ u_n \} \subset X^{1,N}_{\gamma}(\re^N)$, $u_n \ne 0$, $\| \nabla u_n \|^N_{L^N} + \| u_n \|^N_{N, \gamma} \le 1$,
be a maximizing sequence of $\tilde{B}(N, \beta, \gamma)$:
\[
	\int_{\re^N} \Phi_N (\alpha_{N, \beta} |u_n|^{\frac{N}{N-1}}) \frac{dx}{|x|^{\beta}} = \tilde{B}(N, \beta, \gamma) + o(1)
\]
as $n \to \infty$.
We may assume $\| \nabla u_n \|^N_{L^N(\re^N)} < 1$ for any $n \in \N$.
Define
\[
	\begin{cases}
	&v_n(x) = \frac{u_n(\la_n x)}{\| \nabla u_n \|_N}, \quad (x \in \re^N) \\
	&\la_n = \( \frac{1 - \| \nabla u_n \|_N^N}{\| \nabla u_n \|^N_N} \)^{1/(N-\gamma)} > 0.
	\end{cases}
\]
Thus by (\ref{scaling}), we see
\begin{align*}
	&\| \nabla v_n \|^N_{L^N(\re^N)} = 1, \\
	&\|  v_n \|^{\frac{N(N-\beta)}{N-\gamma}}_{N, \gamma} = \( \frac{\la_n^{-(N-\gamma)}}{\| \nabla u_n \|^N_N} \| u_n \|^N_{N, \gamma} \)^{\frac{N-\beta}{N-\gamma}}
	= \( \frac {\| u_n \|^N_{N, \gamma}}{1 - \| \nabla u_n \|^N_N} \)^{\frac{N-\beta}{N-\gamma}} \le 1,
\end{align*}
since $\| \nabla u_n \|^N_N + \| u_n \|^N_{N, \gamma} \le 1$.
Thus, setting  
\[
	\alpha_n = \alpha_{N, \beta} \| \nabla u_n \|^{\frac{N}{N-1}}_N < \alpha_{N, \beta}
\]
for any $n \in \N$,
we may test $\tilde{A}(N, \alpha_n, \beta, \gamma)$ by $\{ v_n \}$, which results in
\begin{align*}
	\tilde{B}(N, \beta, \gamma) + o(1) &= \int_{\re^N} \Phi_N (\alpha_{N, \beta} |u_n(y)|^{\frac{N}{N-1}}) \frac{dy}{|y|^{\beta}} \\ 
	&= \la_n^{N-\beta} \int_{\re^N} \Phi_N (\alpha_{N, \beta} \| \nabla u_n \|_N^{\frac{N}{N-1}} |v_n(x)|^{\frac{N}{N-1}}) \frac{dx}{|x|^{\beta}} \\ 
	&= \la_n^{N-\beta} \int_{\re^N} \Phi_N (\alpha_n |v_n(x)|^{\frac{N}{N-1}}) \frac{dx}{|x|^{\beta}} \\ 
	&\le \la_n^{N-\beta} \( \frac{1}{\| v_n \|_{N, \beta}^N} \)^{\frac{N-\beta}{N-\gamma}} \int_{\re^N} \Phi_N (\alpha_n |v_n(x)|^{\frac{N}{N-1}}) \frac{dx}{|x|^{\beta}} \\ 
	&\le \la_n^{N-\beta} \tilde{A}(N, \alpha_n, \beta, \gamma) = \( \frac{1 - \| \nabla u_n \|_{N}^N}{\| \nabla u_n \|^N_{N}} \)^{\frac{N-\beta}{N-\gamma}} \tilde{A}(N, \alpha_n, \beta, \gamma) \\
	&= \( \frac{1 - \(\frac{\alpha_n}{\alpha_{N,\beta}}\)^{N-1}}{\(\frac{\alpha_n}{\alpha_{N,\beta}}\)^{N-1}} \)^{\frac{N-\beta}{N-\gamma}} \tilde{A}(N, \alpha_n, \beta, \gamma) \\
	&\le \sup_{\alpha \in (0, \alpha_{N, \beta})} \( \frac{1 - \(\frac{\alpha}{\alpha_{N,\beta}}\)^{N-1}}{\(\frac{\alpha}{\alpha_{N,\beta}}\)^{N-1}} \)^{\frac{N-\beta}{N-\gamma}} \tilde{A}(N, \alpha, \beta, \gamma).
\end{align*}
Here we have used a change of variables $y = \la_n x$ for the second equality, and $\| v_n \|^{\frac{N(N-\beta)}{N-\gamma}}_{N, \gamma} \le 1$ for the first inequality.
Letting $n \to \infty$, we have the desired result.
\qed

%%%%%%%%%%%%%%%%%%%%%%%%%%%%%%%%%%%%%%%%%%%%%%%%%%%%%%%%%%%%%%%%%%%%%%%%%%%%%%%%%%%%%%%%%%%%%%%%%%%%%%%%%%%%%%%%%%%
%
% Proof of Theorem \ref{Theorem:asymptotic} 
%
%%%%%%%%%%%%%%%%%%%%%%%%%%%%%%%%%%%%%%%%%%%%%%%%%%%%%%%%%%%%%%%%%%%%%%%%%%%%%%%%%%%%%%%%%%%%%%%%%%%%%%%%%%%%%%%%%%%

\vspace{1em}\noindent
{\it Proof of Theorem \ref{Theorem:asymptotic}}: 
Again, we prove theorem for $\tilde{A}(N, \alpha, \beta, \gamma)$ only.
The assertion that
\[
	\tilde{A}(N, \alpha, \beta, \gamma) \le \( \frac{C_2}{1 - \(\frac{\alpha}{\alpha_{N,\beta}}\)^{N-1}} \)^{\frac{N-\beta}{N-\gamma}}
\]
follows form Theorem \ref{Theorem:relation} and the fact that $\tilde{B}(N, \beta, \gamma) < \infty$ when $0 \le \gamma \le \beta < N$.

For the rest, we need to prove that there exists $C > 0$ such that for any $\alpha < \alpha_{N, \beta}$ sufficiently close to $\alpha_{N, \beta}$,
it holds that
\begin{equation}
\label{Lower}
	\( \frac{C}{1 - \(\frac{\alpha}{\alpha_{N,\beta}}\)^{N-1}} \)^{\frac{N-\beta}{N-\gamma}} \le \tilde{A}(N, \alpha, \beta, \gamma).
\end{equation}
For that purpose, we use the weighted Moser sequence (\ref{Moser sequence}) again.
By (\ref{Moser_estimates(2)}), we have $N_1 \in \N$ such that
if $n \in \N$ satisfies $n \ge N_1$, then it holds
\begin{equation}
\label{lower1}
	\| u_n \|_{N, \gamma}^N \le \frac{2 (N-\gamma) \Gamma (N+1)}{(N-\beta)^{N+1}} (1/n). 
\end{equation}
On the other hand,
\begin{align*}
	\int_{\re^N} \Phi_N(\alpha |u_n|^{N/(N-1)}) \frac{dx}{|x|^{\beta}} &\ge \omega_{N-1} \int_0^{e^{-b_n}} \Phi_N \( \alpha (A_n b_n)^{N/(N-1)} \) r^{N-1-\beta} dr \\
%	&= \omega_{N-1} \int_0^{e^{-b_n}} \Phi_N \( (\alpha/\alpha_{N,\beta}) n \) r^{N-1-\beta} dr \\
	&= \frac{\omega_{N-1}}{N-\beta} \Phi_N \( (\alpha/\alpha_{N,\beta}) n \) \left[ r^{N-\beta} \right]_{r=0}^{r = e^{-b_n}} \\
	& = \frac{\omega_{N-1}}{N-\beta} \Phi_N \( (\alpha/\alpha_{N,\beta}) n \) e^{-n}.
\end{align*}
Note that there exists $N_2 \in \N$ such that if $n \ge N_2$ then $\Phi_N \( (\alpha/\alpha_{N,\beta}) n \) \ge \frac{1}{2} e^{(\alpha/\alpha_{N,\beta}) n}$.
Thus we have
\begin{equation}
\label{lower2}
	\int_{\re^N} \Phi_N(\alpha |u_n|^{N/(N-1)}) \frac{dx}{|x|^{\beta}} \ge \frac{1}{2} \( \frac{\omega_{N-1}}{N-\beta} \) e^{-(1 - \frac{\alpha}{\alpha_{N,\beta}}) n}.
\end{equation}
Combining (\ref{lower1}) and (\ref{lower2}), we have $C_1(N, \beta, \gamma) > 0$ such that
\begin{equation}
\label{lower3}
	\frac{1}{ \| u_n \|_{N, \gamma}^{\frac{N(N-\beta)}{N-\gamma}}} \int_{\re^N} \Phi_N (\alpha |u_n|^{N/(N-1)}) \frac{dx}{|x|^{\beta}} \ge C_1(N, \beta, \gamma) n^{\frac{N-\beta}{N-\gamma}} e^{-(1 - \frac{\alpha}{\alpha_{N,\beta}}) n}
\end{equation}
holds when $n \ge \max \{ N_1, N_2 \}$.

Note that $\lim_{x \to 1} \( \frac{1-x^{N-1}}{1-x} \) = N-1$, thus 
\[
	\frac{1 - (\alpha/\alpha_{N, \beta})^{N-1}}{1 - (\alpha/\alpha_{N,\beta})} \ge \frac{N-1}{2}
\]
if $\alpha/\alpha_{N, \beta} < 1$ is very close to $1$.
Now, for any $\alpha > 0$ sufficiently close to $\alpha_{N, \beta}$ so that 
\begin{align}
\label{cond_alpha}
	\begin{cases}
	&\max \{ N_1, N_2 \} < \( \frac{2}{1 - \alpha/\alpha_{N,\beta}} \), \\
	&\frac{1 - (\alpha/\alpha_{N, \beta})^{N-1}}{1 - (\alpha/\alpha_{N,\beta})} \ge \frac{N-1}{2},
	\end{cases}
\end{align}
we can find $n \in \N$ such that
\begin{align}
\label{cond_n}
	\begin{cases}
	&\max \{ N_1, N_2 \} \le n \le \( \frac{2}{1 - \alpha/\alpha_{N,\beta}} \), \\
	&\( \frac{1}{1 - \alpha/\alpha_{N,\beta}} \) \le n.
	\end{cases}
\end{align}
We fix $n \in \N$ satisfying (\ref{cond_n}).
Then by $1 \le n (1 - \alpha/\alpha_{N, \beta}) \le 2$, (\ref{lower3}) and (\ref{cond_alpha}), we have
\begin{align*}
	&\frac{1}{ \| u_n \|_{N, \beta}^N} \int_{\re^N} \Phi_N(\alpha |u_n|^{N/(N-1)}) \frac{dx}{|x|^{\beta}} \ge C_1(N, \beta, \gamma) n^{\frac{N-\beta}{N-\gamma}} e^{-2} \\
	&\ge C_2(N, \beta, \gamma) \( \frac{1}{1 - (\alpha/\alpha_{N, \beta})} \)^{\frac{N-\beta}{N-\gamma}} \ge \frac{N-1}{2} C_2(N, \beta, \gamma) \( \frac{1}{1 - (\alpha/\alpha_{N, \beta})^{N-1}} \)^{\frac{N-\beta}{N-\gamma}} \\
	&= C_3(N, \beta, \gamma) \( \frac{1}{1 - (\alpha/\alpha_{N, \beta})^{N-1}} \)^{\frac{N-\beta}{N-\gamma}},
\end{align*}
where $C_2(N, \beta, \gamma) = e^{-2} C_1(N, \beta, \gamma)$ and $C_3(N, \beta, \gamma) = \frac{N-1}{2} C_2(N, \beta, \gamma)$.
Thus we have (\ref{Lower})
%\[
%	\tilde{A}(N, \alpha, \beta) \ge \frac{C}{1 - \(\frac{\alpha}{\alpha_{N,\beta}}\)^{N-1}} 
%\]
for some $C > 0$ independent of $\alpha$ which is sufficiently close to $\alpha_{N, \beta}$.
\qed

%---------------------------------------------------------------------------
%
% Acknowledgment and References
%
%---------------------------------------------------------------------------

\vspace{1em}\noindent
{\bf Acknowledgments.}

%\begin{acknowledgements}
The first author (V. H. N.) was supported by CIMI's postdoctoral research fellowship. The second author (F.T.) was supported by 
JSPS Grant-in-Aid for Scientific Research (B), No.15H03631, and
JSPS Grant-in-Aid for Challenging Exploratory Research, No.26610030.
%\end{acknowledgements}

\end{document}